\definecolor{darkg}{RGB}{0,128,0}
\newtheorem{mainthm}{Theorem}
\newtheorem{thm}{Theorem}[section]
\newtheorem{lma}[thm]{Lemma}
\newtheorem{cor}[thm]{Corollary}
\newtheorem{prp}[thm]{Proposition}
\newcommand{\CC}{\mathbf{C}}
\newcommand{\RR}{\mathbf{R}}
\newcommand{\ZZ}{\mathbf{Z}}
\newcommand{\OP}{\mathrm}
\newcommand{\co}{\mskip0.5mu\colon\thinspace}
\newcommand{\id}{\mathrm{id}}
\author{Georgios Dimitroglou Rizell and Jonathan David Evans}
\title[Exotic spheres, symplectomorphism groups]{Exotic spheres and the topology\\ of symplectomorphism groups}
\begin{document}
\maketitle
\begin{abstract}
We show that, for certain families $\phi_{\mathbf{s}}$ of diffeomorphisms of high-dimensional spheres, the commutator of the Dehn twist along the zero-section of $T^*S^n$ with the family of pullbacks $\phi^*_{\mathbf{s}}$ gives a noncontractible family of compactly-supported symplectomorphisms. In particular, we find examples: where the Dehn twist along a parametrised Lagrangian sphere depends up to Hamiltonian isotopy on its parametrisation; where the symplectomorphism group is not simply-connected, and where the symplectomorphism group does not have the homotopy-type of a finite CW-complex. We show that these phenomena persist for Dehn twists along the standard matching spheres of the $A_m$-Milnor fibre. The nontriviality is detected by considering the action of symplectomorphisms on the space of {\em parametrised} Lagrangian submanifolds. We find related examples of symplectic mapping classes for $T^*(S^n\times S^1)$ and of an exotic symplectic structure on $T^*(S^n\times S^1)$ standard at infinity.
\end{abstract}

\section{Introduction}

Arnol'd \cite{Arnold} first noticed that the monodromy of the Lefschetz fibration
\[(z_0,\ldots,z_n)\to\sum_{i=0}^nz_i^2\]
around the unit circle is naturally realised by a compactly-supported symplectomorphism $\tau$ of the fibre, $T^*S^{n}$, called the {\em model Dehn twist}. Given a Lagrangian embedding $\ell\co S^n\to X$ of a sphere into a symplectic manifold $X$, we can implant the Dehn twist into a Weinstein neighbourhood and obtain, canonically up to contractible choices, a symplectomorphism $\tau_{\ell}$ in the group $\OP{Symp}^c(X)$ of compactly-supported symplectomorphisms. Symplectomorphisms obtained this way are called {\em Dehn twists}; in this paper we will see that the isotopy class of $\tau_{\ell}$ can depend on the parametrisation $\ell$.

Since their introduction by Arnol'd, Dehn twists have been studied extensively by Seidel \cite{SeidelTS2,GradedLag,SeidelLES,SeidelLectures}; they provide one of the few known sources of symplectomorphisms which are nontrivial in the symplectic mapping class group (the group of components of the symplectomorphism group). Indeed, in some low-dimensional cases \cite{Evans,Gromov,Lick,SeidelTS2,Wu} their isotopy classes are known to generate the symplectic mapping class group.

Given a diffeomorphism $\phi\co S^n\to S^n$ one gets a non-compactly supported symplectomorphism $\phi^*\co T^*S^n\to T^*S^n$ by pulling back covectors. Conjugating the model twist by $\phi^*$ gives a new symplectomorphism $\tau_{\phi}:=\phi^*\tau(\phi^{-1})^*$ which is again compactly supported; if $\ell$ is a Lagrangian embedding and we implant $\tau_{\phi}$ into a Weinstein neighbourhood instead of $\tau$, the result is precisely $\tau_{\ell\circ\phi}$. One can therefore use the topology of the diffeomorphism group of $S^n$ as a potential source for topology in $\OP{Symp}^c(X)$ for any $X$ containing a Lagrangian sphere.

\begin{dfn}
Let $\Theta_n$ denote the group of smooth homotopy $n$-spheres under connected sum. Fix a cotangent fibre $\Lambda \subset T^*S^n$ and let $\mathcal{L}_{n} \subset \Theta_n$ denote the subset of homotopy spheres which admit a Lagrangian embedding into $T^*S^n$, with the additional requirement that the embedding intersects $\Lambda$ transversely in exactly one point.
\end{dfn}

It follows from work of Abouzaid \cite{Abouzaid}, Abouzaid-Kragh \cite{AbouzaidKragh} and Ekholm-Smith \cite{EkholmSmith} that:

\begin{prp}[(See Section \ref{sct:localknot} for the proof)]\label{prp:localknot}
$\mathcal{L}_{n}\subset bP_{n+1}$, where $bP_{n+1}$ denotes the set of smooth homotopy $n$-spheres which bound parallelisable manifolds.
\end{prp}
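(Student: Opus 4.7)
The plan is to reduce the statement to a theorem of Ekholm--Smith, which asserts that if a homotopy sphere $\Sigma^n$ admits an exact Lagrangian immersion into $\CC^n$ with a single transverse double point, then $\Sigma \in bP_{n+1}$. The conversion from our hypothesis to such an immersion uses the Weinstein handle picture of $T^*S^n$.

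Concretely, realise $T^*S^n$ as the standard Weinstein ball $D^{2n} \simeq \CC^n$ with one critical Weinstein $n$-handle attached along the standard Legendrian unknot $\Lambda_0 \subset S^{2n-1}$. The cotangent fibre $\Lambda$ is (Hamiltonian isotopic to) the cocore disk of this handle, and the zero section is the union of the handle core with the standard real Lagrangian disk $D_0 := \RR^n \cap D^{2n}$ filling $\Lambda_0$. Given a Lagrangian embedding $\ell\co \Sigma \hookrightarrow T^*S^n$ with $\ell(\Sigma) \cap \Lambda = \{p\}$ transverse, the negative Liouville flow pushes $\ell(\Sigma)$ out of the handle, leaving a Lagrangian $n$-disk $L_{\Sigma} \subset D^{2n}$ with Legendrian boundary $\Lambda_0$; smoothly, $L_{\Sigma} \cong \Sigma \setminus \mathring{B}^n$.

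Now glue $L_{\Sigma}$ to the standard Lagrangian filling $D_0$ along $\Lambda_0$ through a Lagrangian collar. The result is a closed exact Lagrangian immersion $\iota\co \Sigma \to \CC^n$ whose only self-intersection is the single transverse double point arising from the original meeting of $\ell(\Sigma)$ with the cocore---this is Lagrangian surgery performed in reverse, so the one-intersection hypothesis is exactly what forces the double-point count to equal one. The smooth type of the domain is identified with $\Sigma$ by recognising $L_\Sigma \cup_{\Lambda_0} D_0$ as a homotopy-disk decomposition of $\Sigma$; the results of Abouzaid and Abouzaid--Kragh on nearby Lagrangians enter here to ensure that $L_\Sigma$ is honestly diffeomorphic to a homotopy disk with the correct smooth structure, and that the original $\ell$ could be taken exact to begin with.

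Invoking Ekholm--Smith then yields $\Sigma \in bP_{n+1}$. The main technical obstacle is the Weinstein cut-and-paste step: one must verify that the Liouville flow produces a Lagrangian disk $L_\Sigma$ meeting the attaching sphere cleanly, and that the subsequent collar gluing with $D_0$ introduces no further self-intersections beyond the single one coming from $p$. This reverse Lagrangian surgery, with its bookkeeping of smooth types and transverse intersections, is the geometric heart of the argument.
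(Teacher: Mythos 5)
There are two genuine gaps. The first is in your citation of Ekholm--Smith: their theorem does not say that any exact Lagrangian immersion of a homotopy sphere into $\CC^n$ with a single transverse double point forces $\Sigma\in bP_{n+1}$. It requires in addition that the double point have the same grading as that of the Whitney immersion and, crucially, that the stable Lagrangian Gauss map $\Sigma\to U/O$ be nullhomotopic. That second condition is where the real work lies: it is automatic only when $\pi_n(U/O)=0$ ($n$ even or $n\equiv 7\bmod 8$); for $n\equiv 3\bmod 8$ one has $\pi_n(U/O)=\ZZ/2$ and one must invoke Abouzaid--Kragh's result that the relevant class is even for exact Lagrangian \emph{embeddings} in $T^*S^n$; and for $n\equiv 1\bmod 4$ the Gauss map obstruction cannot be controlled this way, so the paper falls back on Abouzaid's theorem, which treats exact Lagrangian homotopy spheres in $T^*S^n$ directly when $n\geq 9$, $n\equiv 1\bmod 4$. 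Your proposal omits this entire case analysis and misattributes the roles of Abouzaid and Abouzaid--Kragh: exactness of $\ell$ is automatic since $H^1(\Sigma)=0$, and no nearby-Lagrangian input is needed to identify the smooth type of the domain, which is $\Sigma$ by construction.

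The second gap is in the construction of the immersion itself. If you replace the part of $\ell(\Sigma)$ running over the handle by the standard filling disc $D_0\subset D^{2n}$ of the attaching Legendrian, the double points of the resulting immersion are not only the one coming from the cocore intersection: they include every point of $L_\Sigma\cap D_0$. Since $D_0$ is half of the zero-section and the hypothesis says nothing about $\ell(\Sigma)\cap S^n$, this intersection is completely uncontrolled, so your claim that the collar gluing introduces no further self-intersections does not hold. The paper avoids this with a different device: first Hamiltonian isotope $\ell$ so that it coincides with the zero-section on a neighbourhood of the fibre $\Lambda$ (this is exactly what the one-point transverse intersection with $\Lambda$ buys you), then compose with an \emph{immersed} Weinstein neighbourhood $f\co D^*S^n\looparrowright\CC^n$ of the Whitney immersion, arranging that both preimages of the Whitney double point lie in that neighbourhood. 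The double points of $f$ are concentrated over two cotangent fibres where $\ell(\Sigma)$ agrees with the zero-section, so $f\circ\ell$ has exactly one double point, with the Whitney grading; intersections of $\ell(\Sigma)$ with the rest of the zero-section are harmless because the zero-section is not part of the immersed copy of $\Sigma$. To salvage your handle picture you would need to make $L_\Sigma$ disjoint from the interior of $D_0$, which is not clear.
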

\begin{rmk}
\label{rmk:poincare}
In the case when $n=1,2,3,5,6,7$ the above proposition is vacuously true, as follows by the solution to the generalised Poincar\'{e} conjecture in these dimensions: such a homotopy sphere is diffeomorphic to $S^n$. For a fuller discussion of the difference between $\Theta_n$ and $bP_{n+1}$, see \cite{KervaireMilnor}. The sizes of $\Theta_n$ and $bP_{n+1}$ are listed respectively as A001676 and A187595 in the Online Encyclopedia of Integer Sequences \cite{OEIS}.
\end{rmk}

\begin{dfn}
Given a $k$-parameter family $\phi_{\mathbf{s}}\in\OP{Diff}(S^n)$ of diffeomorphisms ($\mathbf{s}\in S^k$) representing a homotopy class $\alpha\in\pi_k(\OP{Diff}(S^n))$, define the homotopy $(n+k+1)$-sphere
\begin{gather*}
S^{n+k+1}_{\alpha}:=\left(D^{n+1}\times S^k\right)\cup_{\Phi}\left(S^n\times D^{k+1}\right),\\
\Phi\co S^n\times S^k\to S^n\times S^k,\ \Phi(x,y)=(\phi_{y}(x),y).
\end{gather*}
In particular, this construction provides a homomorphism $\lambda^{n+k}_{k,k}\co\pi_k(\OP{Diff}(S^n))\to\Theta_{n+k+1}$. Given that $n > 5$, it is well-known that $\lambda^n_{0,0}$ is an isomorphism $\pi_0(\OP{Diff}(S^n))\to\Theta_{n+1}$: surjectivity is shown in {\cite[Chapter 9]{Milnor}}, injectivity follows from Cerf's theorem \cite{Cerf} that $\OP{Diff}^+(D^{n+1})$ is connected (otherwise two gluing diffeomorphisms $\phi_1$ and $\phi_2$ could differ by a nontrivial mapping class which nontheless extends over the ball, defining a diffeomorphism of $S^{n+1}_{\phi_1}$ with $S^{n+1}_{\phi_2}$). Also, $\lambda^{n+1}_{1,1}$ is surjective: the map $\pi_1(\OP{Diff}(S^n))\to\Theta_{n+2}\cong\pi_0(\OP{Diff}(S^{n+1}))$ factors through the inclusion isomorphism $\pi_0(\OP{Diff}(D^{n+1},\partial D^{n+1}))\stackrel{\cong}{\to}\pi_0(\OP{Diff}(S^{n+1}))$ via the connecting homomorphism $\pi_1(\OP{Diff}(S^n))\to\pi_0(\OP{Diff}(D^{n+1},\partial D^{n+1}))$ of the fibration
\[\OP{Diff}(D^{n+1},\partial D^{n+1})\to\OP{Diff}(D^{n+1})\to\OP{Diff}(S^n)\]
and the cokernel of this connecting homomorphism is zero by the homotopy long exact sequence of the fibration, since $\pi_0(\OP{Diff}(D^{n+1}))=0$ (again by Cerf's theorem).

The filtration of $\Theta_N$ by the images of these homomorphisms is called the Gromoll filtration. There are many results on the nontriviality of the Gromoll filtration, for example {\cite[Theorem 7.4]{BurgheleaLashofII}}, \cite{AntonelliEtAl,CrowleySchick}. The image of $\lambda^{n+k}_{k,k}$ is usually written $\Gamma^{n+k+1}_{k+1}\subset\Theta_{n+k+1}$.
\end{dfn}

\begin{mainthm}\label{thm:maintheorem}
Let $\phi_{\mathbf{s}}\in\OP{Diff}(S^n)$ be a $k$-parameter family of diffeomorphisms ($\mathbf{s}\in S^k$) representing a homotopy class $\alpha\in\pi_k(\OP{Diff}(S^n))$ such that $S^{n+k+1}_{\alpha}\not\in\mathcal{L}_{n+k+1}$. Then:
\begin{enumerate}
\item the family $\tau^{-1}\tau_{\phi_{\mathbf{s}}}$ is nontrivial in $\pi_k(\OP{Symp}^c(T^*S^n))$;
\item moreover if $\ell\co S^n\to A^n_m$ is a Lagrangian embedding of one of the standard matching spheres in the complex $n$-dimensional $A_m$-Milnor fibre then the family $\tau_{\ell}^{-1}\tau_{\ell\circ\phi_{\mathbf{s}}}$ is nontrivial in $\pi_k(\OP{Symp}^c(A^n_m))$.
\end{enumerate}
\end{mainthm}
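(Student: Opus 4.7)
My plan is to prove (1) by contradiction: assuming $\tau^{-1}\tau_{\phi_{\mathbf s}}$ is nullhomotopic in $\pi_k(\OP{Symp}^c(T^*S^n))$, I will construct a Lagrangian embedding of $S^{n+k+1}_\alpha$ into $T^*S^{n+k+1}$ meeting a cotangent fibre transversely in one point, directly contradicting $S^{n+k+1}_\alpha\notin\mathcal{L}_{n+k+1}$.

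A nullhomotopy provides an extension $\Psi\co D^{k+1}\to\OP{Symp}^c(T^*S^n)$ with $\Psi|_{\partial}=\tau^{-1}\tau_{\phi_{\mathbf s}}$ and $\Psi_0=\OP{id}$, which by standard methods for exact symplectic manifolds is realized by a family of Hamiltonian isotopies with compactly supported, time-dependent Hamiltonian $H_t$, $t\in D^{k+1}$. I then form the Lagrangian suspension
\[
L_\Psi=\bigl\{\bigl(\Psi_t(x),\,t,\,H_t(\Psi_t(x))\,dt\bigr):x\in S^n,\ t\in D^{k+1}\bigr\}\subset T^*S^n\times T^*D^{k+1},
\]
a Lagrangian submanifold diffeomorphic to $S^n\times D^{k+1}$ that agrees with the zero section outside a compact set. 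Using the standard decomposition $S^{n+k+1}=(D^{n+1}\times S^k)\cup_{\OP{id}}(S^n\times D^{k+1})$ and a Weinstein tubular neighborhood, I transplant $L_\Psi$ into $T^*S^{n+k+1}$ as a deformation of the $S^n\times D^{k+1}$-portion of the zero section; since it agrees with the zero section near the glueing locus, I glue to the unmodified $D^{n+1}\times S^k$-portion, obtaining a closed Lagrangian $L\subset T^*S^{n+k+1}$.

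The critical claim is $L\cong S^{n+k+1}_\alpha$. This requires analysing how the Lagrangian suspension of $\Psi_t=\tau^{-1}\tau_{\phi_{\mathbf s}}=\tau^{-1}\phi_{\mathbf s}^*\tau(\phi_{\mathbf s}^{-1})^*$ interacts, near the boundary $t\in S^k$, with the Weinstein identification. The conjugation by the non-compactly-supported $\phi_{\mathbf s}^*$ propagates the family $\phi_{\mathbf s}$ to the base, producing an effective clutching diffeomorphism of $S^n\times S^k$ isotopic to $\Phi(x,s)=(\phi_s(x),s)$, whence $L\cong S^{n+k+1}_\alpha$. Transversality with a cotangent fibre is arranged by picking the base point in $D^{n+1}\times S^k$ away from the support of the deformation. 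Statement (2) is then obtained by running the same construction locally in a Weinstein neighborhood of the matching sphere $\ell(S^n)\subset A^n_m$, where $\tau_\ell$ agrees with the model $\tau$ and the argument transfers verbatim.

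The main obstacle is the precise identification of the clutching diffeomorphism of $L$ as $\Phi$. This demands careful bookkeeping of how the Weinstein identifications interact with the conjugation by $\phi_{\mathbf s}^*$ inside the Lagrangian suspension; the Dehn twist's role is to convert the non-compactly-supported reparametrization by $\phi_{\mathbf s}^*$ into genuine clutching data, and verifying that the resulting clutching is isotopic to $\Phi$ (rather than trivial, or a mere commutator with $\tau$ that would register as zero in $\pi_k(\OP{Diff}(S^n))$) is the heart of the proof.
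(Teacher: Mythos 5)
Your overall strategy --- nullhomotopy $\Rightarrow$ Lagrangian suspension $\Rightarrow$ exotic Lagrangian sphere in $T^*S^{n+k+1}$, contradicting $S^{n+k+1}_\alpha \notin \mathcal{L}_{n+k+1}$ --- is the right one, but the object you suspend is wrong, and this is not a bookkeeping issue: it kills the argument. You suspend the \emph{zero-section} of $T^*S^n$. The clutching diffeomorphism of the resulting closed Lagrangian is then the restriction of $\tau^{-1}\phi_{\mathbf{s}}^*\tau(\phi_{\mathbf{s}}^{-1})^*$ to the zero-section, which is the genuine commutator $A^{-1}\circ\phi_{\mathbf{s}}^{-1}\circ A\circ\phi_{\mathbf{s}}$, where $A=\tau|_{S^n}$ is the antipodal map. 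As a class in $\pi_k(\OP{Diff}(S^n))$ this equals $\alpha$ minus the image of $\alpha$ under conjugation by $A$; it vanishes whenever $A$ is isotopic to the identity (e.g.\ $n$ odd) and in any case is not $\alpha$. So your glued-up Lagrangian is (generically) the \emph{standard} sphere and no contradiction is obtained. You correctly flag exactly this danger (``a mere commutator with $\tau$ that would register as zero'') as the heart of the proof, but your setup realises the bad case rather than avoiding it, and you give no argument to escape it.

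The fix, which is what the paper does, is to track the action of the commutator on a \emph{cotangent fibre} $T^*_xS^n$ with $x$ outside the support of the $\phi_{\mathbf{s}}$ (one first arranges, by composing with a family of rotations, that all $\phi_{\mathbf{s}}$ fix a ball containing $x$; this changes neither $S^{n+k+1}_\alpha$ nor the homotopy class of the family of twists). Then $(\phi_{\mathbf{s}}^{-1})^*$ fixes the fibre pointwise; after a preliminary Hamiltonian isotopy, $\tau$ carries the fibre to a sphere meeting the support region only along a piece of the zero-section; $\phi_{\mathbf{s}}^*$ therefore preserves this sphere setwise and reparametrises it by $\phi_{\mathbf{s}}$; and $\tau^{-1}$ brings it back. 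The net effect (Lemma \ref{lma:param}) is that the commutator preserves the fibre setwise and reparametrises it by a family isotopic to $\phi_{\mathbf{s}}$ --- with no cancelling commutator. Since the fibre is noncompact it cannot be closed up inside $T^*S^n$ itself; the paper instead works in $A^n_m$ with $m\geq2$, where the fibre completes to the adjacent matching sphere $L_1$, suspends $L_1$ along the putative nullhomotopy, and implants a neighbourhood of $\bigcup_i L_i\times I^{k+1}$ into $T^*S^{n+k+1}$ so that $L_1\times I^{k+1}$ lands in the zero-section and the image misses a cotangent fibre. Part (1) then follows from the $A^n_m$ case (a nullhomotopy in $T^*S^n=A^n_1$ could be performed inside a Weinstein neighbourhood in $A^n_m$), rather than the other way round as you propose. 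Without the fibre computation and the passage to $A^n_m$, your proof does not close.
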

\begin{rmk}
A result of Seidel shows that the Dehn twist has infinite order inside $\pi_0(\OP{Symp}^c(A^n_m))$, see e.g.~\cite[Lemma 5.7]{GradedLag} for a proof. The same proof also shows that the elements $\tau_{\ell}^{-1}\tau_{\ell \circ \phi}$ in $\pi_0(\OP{Symp}^c(A^n_m))$ are not Hamiltonian isotopic to any non-zero power of a (reparametrised) Dehn twist. The reason is that these elements act trivially on the grading of the Lagrangian sphere $\ell$ (viewed as a graded Lagrangian submanifold) while, whenever $n>1$, any non-zero power of the Dehn twist acts non-trivially on this grading.
\end{rmk}

By Proposition \ref{prp:localknot}, any element of $\alpha\in\pi_k(\OP{Diff}(S^n))$ with $S^{n+k+1}_{\alpha}\not\in bP_{n+k+1}$ is a class to which Theorem \ref{thm:maintheorem} can be applied. For example, whenever $\Theta_N\setminus bP_{N+1}$ is nonempty, surjectivity of $\lambda^{N-1}_{0,0}$ and $\lambda^{N-1}_{1,1}$ imply that we get nontrivial elements of $\pi_0(\OP{Symp}^c(T^*S^{N-1}))$ and $\pi_1(\OP{Symp}^c(T^*S^{N-2}))$. Another nice example comes from the fact \cite[Table in Section 3]{AntonelliEtAl} that $\Gamma^{13}_3=\Theta_{13}=\ZZ/3$, $bP_{14}=0$. This implies that $\pi_2(\OP{Symp}^c(T^*S^{10}))\neq 0$ and since the identity component $\OP{Ham}^c(T^*S^n)$ is a path-connected H-space, {\cite[Theorem 6.11]{Browder}} gives the following corollary:

\begin{cor}\label{cor:browder}
The group $\OP{Ham}^c(T^*S^{10})$ does not have the homotopy-type of a finite CW complex.
\end{cor}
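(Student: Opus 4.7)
The plan is to produce a nontrivial element of $\pi_2(\OP{Ham}^c(T^*S^{10}))$ and then invoke Browder's theorem, which says that a path-connected H-space with the homotopy type of a finite CW complex has trivial $\pi_2$. Two ingredients are handed to us in the paragraph preceding the corollary: $\Gamma^{13}_3=\Theta_{13}=\ZZ/3$ (so the Gromoll-type map $\lambda^{12}_{2,2}\co\pi_2(\OP{Diff}(S^{10}))\to\Theta_{13}$ is surjective) and $bP_{14}=0$.

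First, using the surjectivity of $\lambda^{12}_{2,2}$, I would pick a $2$-parameter family $\phi_{\mathbf{s}}\in\OP{Diff}(S^{10})$, based at the identity, whose class $\alpha\in\pi_2(\OP{Diff}(S^{10}))$ maps to a generator of $\Theta_{13}$. Because $bP_{14}$ is trivial, the corresponding homotopy sphere $S^{13}_{\alpha}$ does not lie in $bP_{14}$, so Proposition \ref{prp:localknot} ensures $S^{13}_\alpha\notin\mathcal{L}_{13}$. Theorem \ref{thm:maintheorem}(1), applied with $n=10$ and $k=2$, then yields a nontrivial class $[\tau^{-1}\tau_{\phi_{\mathbf{s}}}]\in\pi_2(\OP{Symp}^c(T^*S^{10}))$.

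Next, I would transfer this class from the full compactly-supported symplectomorphism group to its identity component, which, by exactness of $T^*S^{10}$, coincides with $\OP{Ham}^c(T^*S^{10})$. Since the family $\tau^{-1}\tau_{\phi_{\mathbf{s}}}$ is based at the identity (taking $\phi$ at the basepoint of $S^2$ to be $\id$), the whole $S^2$-family lies in this identity component, so it represents a nonzero element of $\pi_2(\OP{Ham}^c(T^*S^{10}))$.

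Finally, $\OP{Ham}^c(T^*S^{10})$ is a topological group, hence a path-connected H-space; Browder's Theorem 6.11 then forces $\pi_2$ to vanish if the space were of finite CW type, contradicting the previous paragraph. The substantive work is entirely in Theorem \ref{thm:maintheorem}; the remaining steps are bookkeeping, with the only minor point being the standard identification of $\OP{Ham}^c$ with the identity component of $\OP{Symp}^c$ for an exact symplectic manifold.
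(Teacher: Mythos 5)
Your argument is correct and is essentially the paper's own proof: the paragraph preceding the corollary combines $\Gamma^{13}_3=\Theta_{13}=\ZZ/3$ and $bP_{14}=0$ with Proposition \ref{prp:localknot} and Theorem \ref{thm:maintheorem} (for $n=10$, $k=2$) to get $\pi_2(\OP{Symp}^c(T^*S^{10}))\neq 0$, and then applies Browder's Theorem 6.11 to the path-connected H-space $\OP{Ham}^c(T^*S^{10})$. The only minor imprecision is your justification that $\OP{Ham}^c(T^*S^{10})$ is the identity component of $\OP{Symp}^c(T^*S^{10})$: this follows from the vanishing of the compactly supported flux group, i.e.\ $H^1_c(T^*S^{10};\RR)=0$, rather than from exactness of the symplectic form as such.
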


Our methods extend to prove:

\begin{mainthm}\label{thm:anothersymp}
If there is a nontrivial homotopy class $\alpha\in\pi_1(\OP{Diff}(S^n))$ with the property that $S^{n+2}_{\alpha}\not\in\mathcal{L}_{n+2}$, then there is a symplectomorphism $\Psi\in\OP{Symp}^c(T^*(S^n\times S^1))$ which is not isotopic to the identity through compactly supported symplectomorphisms.
\end{mainthm}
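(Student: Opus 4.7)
The plan is to deduce Theorem~\ref{thm:anothersymp} from the $k=1$ case of Theorem~\ref{thm:maintheorem}, promoting the nontrivial loop in $\pi_1(\OP{Symp}^c(T^*S^n))$ furnished by that theorem to a nontrivial mapping class in $\pi_0(\OP{Symp}^c(T^*(S^n\times S^1)))$ via a commutator construction.

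First I would apply Theorem~\ref{thm:maintheorem}(1) with $k=1$ to the given $\alpha\in\pi_1(\OP{Diff}(S^n))$. Since $S^{n+2}_\alpha\notin\mathcal{L}_{n+2}$ by hypothesis, the loop $\Phi_\theta:=\tau^{-1}\tau_{\phi_\theta}$, $\theta\in S^1$, represents a nontrivial class $\beta\in\pi_1(\OP{Symp}^c(T^*S^n))$ based at the identity. By isotopy extension I would arrange for $\phi_\theta$ to be supported in a small disk $D\subset S^n$, so that the supports of the $\Phi_\theta$ lie in a fixed compact subset of $T^*S^n$.

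Next I would construct $\Psi\in\OP{Symp}^c(T^*(S^n\times S^1))$ as follows. Under the identification $T^*(S^n\times S^1)=T^*S^n\times T^*S^1$, set $\tilde\tau:=\tau\times\tau_{S^1}$, the product of the model Dehn twists in each factor (both compactly supported, hence so is $\tilde\tau$). Extend $\phi_\theta$ to a diffeomorphism $\tilde\phi(x,\theta):=(\phi_\theta(x),\theta)$ of $S^n\times S^1$ and form the (non-compactly supported) cotangent lift $\tilde\phi^*\in\OP{Symp}(T^*(S^n\times S^1))$. Define
\[
\Psi:=\tilde\tau\cdot\tilde\phi^*\cdot\tilde\tau^{-1}\cdot(\tilde\phi^{-1})^*.
\]
A direct support analysis shows $\mathrm{supp}(\Psi)\subset\mathrm{supp}(\tilde\tau)\cup\tilde\phi^*(\mathrm{supp}(\tilde\tau))$, which is compact as the image of a compact set under the diffeomorphism $\tilde\phi^*$; hence $\Psi\in\OP{Symp}^c(T^*(S^n\times S^1))$. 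This construction directly mirrors the identity $\tau^{-1}\tau_\phi=[\tau^{-1},\phi^*]$ central to Theorem~\ref{thm:maintheorem}.

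Finally, to show that $\Psi$ is not isotopic to the identity through compactly supported symplectomorphisms, I would adapt the parametrised Lagrangian detection method underlying Theorem~\ref{thm:maintheorem}. A hypothetical isotopy from $\Psi$ to $\id$ through such symplectomorphisms, evaluated on a carefully chosen parametrised Lagrangian submanifold of $T^*(S^n\times S^1)$ (for instance one built from the zero section $S^n\subset T^*S^n$ together with the zero section $S^1\subset T^*S^1$, so that the $S^1$-direction encodes the loop parameter), would unwind to a nullhomotopy of the loop $\Phi_\theta$ in the parametrised Lagrangian space of $T^*S^n$, contradicting Step~1. The hard part will be setting up this unwinding correctly: one must verify that the nontriviality of $\beta$ persists after suspension, which amounts to showing that the natural map from $\pi_1$ of the parametrised Lagrangian space of $T^*S^n$ into $\pi_0$ of the parametrised Lagrangian space of $T^*(S^n\times S^1)$, induced by the product structure, is injective on the class $\beta$.
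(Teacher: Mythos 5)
There is a genuine gap: your proof defers exactly the step that constitutes the theorem. You correctly observe that the hard part is showing that the nontriviality of the loop $\beta=[\tau^{-1}\tau_{\phi_\theta}]\in\pi_1(\OP{Symp}^c(T^*S^n))$ survives suspension to $\pi_0(\OP{Symp}^c(T^*(S^n\times S^1)))$ --- but you then leave that step as a claim about injectivity of a suspension map on parametrised Lagrangian spaces, with no argument. In general the suspension homomorphism $\pi_1(\OP{Ham}^c(X))\to\pi_0(\OP{Symp}^c(X\times T^*S^1))$ has no reason to be injective, and quoting Theorem~\ref{thm:maintheorem} with $k=1$ does not help here: the paper does not deduce Theorem~\ref{thm:anothersymp} from Theorem~\ref{thm:maintheorem}, but instead re-runs the detection argument from scratch in the product. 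Concretely, the missing ingredients are: (i) a hypothetical compactly supported isotopy from $\Psi$ to the identity, supported say in $D^*S^n\times(0,1)\times S^1$, is \emph{lifted to the universal cover} $D^*S^n\times T^*I$, converting the $\pi_0$-statement back into a two-parameter family of symplectomorphisms of $T^*S^n$ that is standard near the boundary of $I^2$; (ii) the two-parameter Lagrangian suspension (Section~\ref{sec:lagsusp}) of the preserved cotangent fibre $T^*_xS^n$ along this family yields a Lagrangian $\RR^n\times I^2\hookrightarrow T^*S^n\times(T^*I)^2$ agreeing with $T^*_xS^n\times I^2$ near the boundary; (iii) implanting this into $A^n_2\times T^*(I^2)$ and then into $T^*S^{n+2}$ via Proposition~\ref{prp:openemb} produces a Lagrangian $S^{n+2}_\alpha$ meeting a cotangent fibre once transversely, contradicting $S^{n+2}_\alpha\notin\mathcal{L}_{n+2}$. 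Without steps (i)--(iii) your argument establishes nothing beyond the construction of a candidate $\Psi$.

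On the construction itself: your $\Psi$ (a commutator of $\tilde\tau=\tau\times\tau_{S^1}$ with the cotangent lift of the fibred diffeomorphism $\tilde\phi$) differs from the paper's, which first uses Lemmas~\ref{lma:modifyhtpyclass} and~\ref{lma:param} to replace the loop by a Hamiltonian loop $C'_t=\phi_H^t$ preserving a cotangent fibre $T^*_xS^n$ setwise, and then defines $\Psi$ as the descent of the Hamiltonian suspension $\Phi_H$ on $T^*S^n\times T^*(0,1)$ to $T^*S^n\times(0,1)\times\RR/\ZZ$ (descent is possible precisely because $C'_t$ is a loop based at the identity). The paper's normalisation is not cosmetic: the setwise-preserved fibre $T^*_xS^n$ is what gets suspended in step~(ii) above, so any alternative construction of $\Psi$ must come equipped with an analogous preserved parametrised Lagrangian before the detection argument can run. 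Your commutator does not obviously preserve such a Lagrangian (both $\tau_{S^1}$ and the lift $\tilde\phi^*$ move the $T^*S^1$-momentum), so even granting the detection machinery you would still owe a verification that your $\Psi$ is amenable to it, or that it is isotopic to the paper's.
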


Given a loop representing $\alpha$ we give an explicit construction of $\Psi$. The isotopy class of $\Psi$ depends only on the homotopy class $\alpha$.

\begin{mainthm}\label{thm:exoticform}
If $n\equiv 3\mod 4$, $n\geq 7$ and $\mathcal{L}_{n+1} \subsetneq \Theta_{n+1}$, there is a symplectic form on $T^*(S^n\times S^1)$ which is standard at infinity, homotopic to the standard form through nondegenerate two-forms standard at infinity, but not symplectomorphic to the standard form via a compactly-supported diffeomorphism.
\end{mainthm}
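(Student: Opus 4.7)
From $\mathcal{L}_{n+1}\subsetneq\Theta_{n+1}$ and the isomorphism $\lambda^{n}_{0,0}\co\pi_0(\Diff(S^n))\xrightarrow{\cong}\Theta_{n+1}$ (valid since $n\geq 7>5$), I choose $\phi\in\Diff(S^n)$ representing a class with $S^{n+1}_\phi\notin\mathcal{L}_{n+1}$. Theorem~\ref{thm:maintheorem}(1) applied with $k=0$ then guarantees that $\Psi:=\tau^{-1}\tau_{\phi}\in\Symp^c(T^*S^n)$ is nontrivial in $\pi_0(\Symp^c(T^*S^n))$. However, since $n$ is odd, the model Dehn twist is smoothly isotopic to the identity through compactly supported diffeomorphisms (using that the antipodal map on $S^n$ is smoothly isotopic to $\id$ for $n$ odd, combined with an appropriate cut-off of the geodesic flow); the same is therefore true of $\tau_\phi=\phi^*\tau(\phi^{-1})^*$ and hence of $\Psi$. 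In sum, $\Psi$ is smoothly but not symplectically isotopic to the identity.

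I then construct the exotic form $\omega'$ via a symplectic mapping-torus gluing. Fix a compactly supported smooth isotopy $\{\Psi_t\}_{t\in[0,1]}$ from $\id$ to $\Psi$, and form the mapping torus
\[
M_{\Psi}:=\bigl(T^*S^n\times\RR\bigr)\big/\bigl((x,t+1)\sim(\Psi(x),t)\bigr).
\]
Since $\Psi$ is symplectic, $\omega_{T^*S^n}$ descends to a closed two-form $\omega_\Psi$ on $M_\Psi$, and $\omega_{\Psi}+d\theta\wedge dp$ is a symplectic form on $M_\Psi\times\RR$ (with $\theta\in\RR/\ZZ$ and $p\in\RR$). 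The smooth isotopy $\Psi_t$ provides a compactly supported diffeomorphism $\Phi\co M_\Psi\times\RR\to T^*(S^n\times S^1)$ equal to the identity outside a compact set, and I set $\omega':=\Phi_*(\omega_\Psi+d\theta\wedge dp)$; this is a symplectic form on $T^*(S^n\times S^1)$ agreeing with $\omega_{\mathrm{std}}$ outside a compact set.

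Next I verify that $\omega'$ is homotopic to $\omega_{\mathrm{std}}$ through nondegenerate two-forms standard at infinity. The space of such nondegenerate deformations is homotopy equivalent to a space of compatible almost-complex structures with the same behaviour at infinity, and the formal difference between the two structures is captured by a class of a loop in $\mathrm{Sp}(2n,\RR)$ coming from the path of linearisations $d\Psi_t$. The congruence $n\equiv 3\pmod 4$, together with $n\geq 7$ placing us in the stable range, is exactly what makes this class vanish (via Bott periodicity of the relevant stable homotopy), so the two forms lie in the same component of nondegenerate two-forms standard at infinity.

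The main obstacle is showing that $\omega'$ is not symplectomorphic to $\omega_{\mathrm{std}}$ through a compactly supported diffeomorphism. Arguing by contradiction, suppose $F$ is such a symplectomorphism. The zero-section of $M_\Psi\times\RR$ sits inside $(T^*(S^n\times S^1),\omega')$ as a canonical exact parametrised Lagrangian $L_0$, smoothly a copy of the mapping torus of $\Psi|_{S^n}=a\phi^{-1}a\phi$ (with $a$ the antipodal map), hence diffeomorphic to $S^n\times S^1$; its image $F(L_0)\subset(T^*(S^n\times S^1),\omega_{\mathrm{std}})$ inherits a parametrisation remembering the class $[\Psi]\in\pi_0(\Symp^c(T^*S^n))$. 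A Lagrangian surgery/collapse along a section $\{\mathrm{pt}\}\times S^1\subset F(L_0)$, using the standard structure at infinity, should then produce a parametrised exact Lagrangian homotopy sphere in $T^*S^{n+1}$ meeting a cotangent fibre transversely in a single point; a careful bookkeeping of framings identifies this homotopy sphere with $S^{n+1}_\phi$, contradicting $S^{n+1}_\phi\notin\mathcal{L}_{n+1}$. Tracking the parametrisation through both $F$ and the surgery, so as to pin down the resulting class in $\Theta_{n+1}$ as precisely $S^{n+1}_\phi$ rather than some other element, is the delicate technical content and the main obstacle of the proof.
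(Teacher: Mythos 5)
Your overall strategy (suspend the smoothly-trivial-but-symplectically-nontrivial class $\tau^{-1}\tau_\phi$ to get a candidate exotic form, and detect exoticness via $S^{n+1}_{[\phi]}\notin\mathcal{L}_{n+1}$) matches the paper's, but there are concrete gaps. First, your construction of $\omega'$ twists in the wrong direction: trivialising the mapping torus $M_\Psi$ over the base circle $\theta$ via $\Psi_t$ changes the form on a set of the shape $K\times S^1\times\RR_p$ with $K\subset T^*S^n$ compact, which is \emph{not} compact in $T^*(S^n\times S^1)$, so $\omega'$ as you define it is not standard at infinity. The paper instead lets the isotopy parameter be the momentum coordinate $p$ of $T^*S^1$, using a path $M_p$ with $M_p=\id$ for $p\le 0$ and $M_p=C$ a genuine compactly supported \emph{symplectomorphism} for $p\ge 1$ (one which moreover preserves a cotangent fibre $T^*_xS^n$, via Lemma \ref{lma:param}); then $\Psi_\phi^*(d\theta_{S^n\times S^1})-d\theta_{S^n\times S^1}$ really is compactly supported. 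Second, your justification that $\Psi$ is smoothly trivial --- ``the model Dehn twist is smoothly isotopic to the identity for $n$ odd because the antipodal map is'' --- is unsubstantiated and fails already for $n=1$; the fact actually needed is that $\phi^*$ is smoothly isotopic to $\id_{T^*S^n}$ through (non-compactly-supported) diffeomorphisms, and this is precisely where $n\equiv 3\bmod 4$ enters (Proposition \ref{prp:smoothisotopy} and Lemma \ref{lma:formsymp}, via $\pi_n(U(n))$ being finite and $\pi_{n+1}(V_{2n,n})=0$). The same formal-Lagrangian-isotopy data is what the paper uses to build the homotopy through nondegenerate forms; your appeal to ``a loop in $Sp(2n,\RR)$ and Bott periodicity'' does not identify the actual obstruction or why it vanishes.

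Third, and most importantly, the heart of the theorem --- deriving a contradiction from a compactly supported symplectomorphism $\Psi$ intertwining the two forms --- is exactly the step you defer as ``the delicate technical content and the main obstacle,'' so the proof is not complete. Moreover the route you sketch (Lagrangian surgery on the image of the zero-section $S^n\times S^1$ along a circle fibre) is harder than necessary and it is unclear how it would land you in $T^*S^{n+1}$ at all. The paper's argument avoids the zero-section entirely: it lifts $\Psi_\phi\circ\Psi$ to the universal cover $T^*(S^n\times\RR)$, where it interpolates between $\id$ and $C$; since $C$ preserves $T^*_xS^n$ setwise and reparametrises it by $\phi$, the image of $T^*_xS^n\times T^*_0\RR$ is a Lagrangian $\RR^n\times\RR$ standard outside a compact set, which is then implanted into $A^n_2\times T^*I$ and, via Proposition \ref{prp:openemb}, into $T^*S^{n+1}$, replacing a disc of the zero-section by a twisted piece. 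The diffeomorphism type $S^{n+1}_{[\phi]}$ and the single transverse intersection with a cotangent fibre then come for free from the construction, with no framing bookkeeping required.
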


Finally, regarding the smooth isotopy class, one can say the following.

\begin{prp}\label{prp:smoothisotopy}
Let $\ell\geq 1$ and let $\phi\in\OP{Diff}(S^{4\ell+3})$ be a diffeomorphism. The symplectomorphisms $\tau$ and $\tau_{\phi}$ of $T^*S^{4\ell+3}$ are compactly supported smoothly isotopic.
\end{prp}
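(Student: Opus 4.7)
The plan is to prove that, for $n = 4\ell+3$ with $\ell \geq 1$, the model Dehn twist $\tau$ is itself compactly supported smoothly isotopic to the identity on $T^*S^n$. Given such an isotopy $\{\tau_t\}$ with $\tau_0=\tau$ and $\tau_1=\id$, the conjugated family $\{\phi^*\tau_t(\phi^{-1})^*\}$ is a smooth isotopy from $\tau_\phi$ to $\id$, and remains compactly supported because $\phi^*$ is a diffeomorphism mapping $\operatorname{supp}(\tau_t)$ bijectively onto $\operatorname{supp}(\phi^*\tau_t(\phi^{-1})^*)$. Concatenating the two isotopies yields the desired compactly supported smooth isotopy between $\tau$ and $\tau_\phi$.

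The geometric input for the isotopy $\tau\sim\id$ is the quaternionic $S^1$-action on $S^{4\ell+3}$. Identify $S^{4\ell+3}$ with the unit sphere in $\mathbf{H}^{\ell+1}$; right multiplication by $e^{i\theta}$ yields a smooth loop of isometries $\sigma_\theta\in SO(4\ell+4)$ with $\sigma_0=\id$ and $\sigma_\pi=-\id$ equal to the antipodal map. The cotangent lift is a Hamiltonian $S^1$-action on $T^*S^{4\ell+3}$ with moment map $\mu\co T^*S^{4\ell+3}\to\RR$. Cutting $\mu$ off by a smooth function $\chi(|p|)$ equal to $1$ near the zero-section and vanishing outside a fixed disc bundle, the time-$\pi$ flow of $\chi(|p|)\mu$ is a compactly supported smooth diffeomorphism $\Psi$ acting as the antipodal map on the zero-section and as the identity outside the disc bundle. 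Shrinking $\chi$ continuously to zero exhibits $\Psi$ as compactly supported smoothly isotopic to the identity.

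It then remains to verify that $\Psi$ and $\tau$ define the same compactly supported smooth isotopy class. Both restrict to the antipodal map on the zero section and equal the identity outside a disc bundle, but they twist the cotangent fibres by different mechanisms: $\tau$ via the normalised geodesic flow and $\Psi$ via the quaternionic $S^1$-action. The main obstacle of the proof is making this identification rigorous; I expect it to follow from a relative Cerf-type pseudo-isotopy argument together with the vanishing of the Gromoll-type framing obstruction that measures the difference between the two twists. The condition $n=4\ell+3$ is what supplies the quaternionic structure used to construct $\Psi$, while $\ell\geq 1$ (that is, $n\geq 7$) places us in the stable range in which the relevant smoothing arguments apply.
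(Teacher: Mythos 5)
Your argument hinges on the claim that the model Dehn twist $\tau$ is itself compactly supported smoothly isotopic to the identity on $T^*S^{4\ell+3}$, and this claim is false --- not just unproved --- for every $n\geq 2$, regardless of parity. Recall (this is Arnol'd's observation quoted in the introduction) that $\tau$ is the monodromy of the Milnor fibration of the $A_1$-singularity $\sum_{i=0}^n z_i^2$, so the open book with page $D^*S^n$ and monodromy $\tau$ has total space the standard sphere $S^{2n+1}$. The total space of an open book depends only on the compactly supported (rel boundary) isotopy class of the monodromy; if $\tau$ were isotopic to the identity through compactly supported diffeomorphisms, the total space would therefore be diffeomorphic to the open book with trivial monodromy, which is $\partial(D^*S^n\times D^2)$, i.e.\ the sphere bundle $S(T^*S^n\oplus\RR^2)\to S^n$. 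The latter has $H^n\cong\ZZ$ while $H^n(S^{2n+1})=0$, so the two are not even homotopy equivalent. (For $n$ even one could also use the action on $H_n$, but for $n$ odd the homology action of $\tau$ is trivial and the open book obstruction is the relevant one.) So no choice of $n$ makes your first step work, and the quaternionic structure on $S^{4\ell+3}$ cannot help.

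In fact your own construction shows where the plan breaks: the time-$\pi$ flow of the cut-off moment map $\chi(|p|)\mu$ is, by shrinking $\chi$, visibly connected to the identity through compactly supported (indeed Hamiltonian) diffeomorphisms, so $\Psi$ lies in the identity component and can never be isotopic to $\tau$. Agreeing with $\tau$ on the zero-section (antipodal map) and near infinity (identity) does not pin down the compactly supported isotopy class; the twisting of the cotangent fibres is precisely what $\tau$ has and $\Psi$ lacks, and the ``relative Cerf-type pseudo-isotopy argument'' you defer is not a missing lemma but an impossibility. The workable route --- the one the paper takes --- never tries to trivialise $\tau$: it compares the two parametrisations $\iota$ and $\iota\circ\phi$ of the zero-section, isotopes them smoothly using Haefliger's theorem, and then uses the h-principles for Lagrangian immersions and for immersions to make the isotopy compatible with the Lagrangian framing data (this is where $n\equiv 3\bmod 4$ and the finiteness of $\pi_0(\OP{Diff}(S^n))$ enter, via the difference class in $\pi_n(U(n))$ and the vanishing of $\pi_{n+1}(V_{2n,n})$), so that the Weinstein identification, and hence the Dehn twist construction, is carried from $\iota$ to $\iota\circ\phi$, giving $\tau\simeq\tau_\phi$ without any claim about either being smoothly trivial.
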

\begin{proof}
Let $\iota$ be a parametrisation of the zero-section and $\phi$ be a diffeomorphism. Work of Haefliger \cite{Plongements} shows that, when $n\geq 5$, there is a smooth isotopy $\iota_t$ between $\iota_0=\iota$ and $\iota_1=\iota\circ\phi$. The differentials $D\iota_t$ give a path of bundle maps which, we would like to prove, is homotopic to a path of Lagrangian bundle maps. Once we know this, the Weinstein framing of the normal bundle, which identifies it with the cotangent bundle canonically up to contractible choices, is carried along this smooth isotopy. Since the construction of the Dehn twist only depends on the choice of an identification of the normal bundle of $S^n$ with $T^*S^n$, this implies that the Dehn twists associated to $\iota$ and $\iota\circ\phi$ are smoothly isotopic.

To show that $D\iota_t$ is homotopic to a path of Lagrangian bundle maps, we first show that $\iota_0$ and $\iota_1$ are homotopic through Lagrangian immersions. It follows from the h-principle for Lagrangian immersions \cite{Lee}, \cite{Gro} that $\iota_0$ and $\iota_1$ are homotopic through Lagrangian immersions if a difference class $d(\iota,\iota\circ\phi)\in\pi_{4\ell+3}(U(4\ell+3))=\ZZ$ vanishes. Fixing $\iota$, the assignment $\phi\mapsto d(\iota,\iota\circ\phi)$ is a homomorphism
\[\pi_0(\OP{Diff}(S^{4\ell+3})) \to \pi_{4\ell+3}(U(4\ell+3)).\]
Since the former group is finite by \cite{KervaireMilnor}, this difference class is zero. Therefore there is a path $\iota_t'$ of Lagrangian immersions between $\iota_0$ and $\iota_1$.

The Hirsch-Smale h-principle for immersions \cite{Hirsch}, \cite{Smale} shows that the fundamental group of the space of immersions $S^n\to T^*S^n$ is $\pi_{n+1}(V_{2n,n})$, where $V_{2n,n}=SO(2n)/SO(n)$ is a Stiefel manifold. This group vanishes when $n=4\ell+3$ by {\cite[Table (b)]{Paechter1}}. Therefore the paths $\iota_t$ and $\iota_t'$ are homotopic. This implies that $D\iota_t$ is homotopic to a path of Lagrangian bundle maps.
\end{proof}

\subsection{Relation to Seidel's work}

This paper is related to, but orthogonal to, Seidel's paper \cite{SeidelExotic} where he constructs symplectomorphisms of cotangent bundles of exotic spheres analogous to iterated Dehn twists (although it is currently unknown if these constructions can be made to have compact support). In particular, the $k=0$ case of Theorem \ref{thm:maintheorem} answers a question posed in that paper. The idea we use, which amounts to detecting nontrivial symplectic mapping classes by looking for a change in the parametrisation of a cotangent fibre, was inspired by Keating's argument in Section 5 of that paper.

\section{Preliminaries}

\subsection{Lagrangian suspension}\label{sec:lagsusp}

The main tool we use is the {\em Lagrangian suspension} construction \cite{AudinCob} which allows us to turn a Hamiltonian isotopy $\phi^t(L)$ of a Lagrangian submanifold into a Lagrangian embedding $L\times[0,1]\to X\times T^*[0,1]$.

We need a generalisation of this construction to $k$-parameter families $\psi_{\mathbf{q}}$, $\mathbf{q}\in I^k$ where $\psi_{\mathbf{q}}$ is the identity for $\mathbf{q}\in\partial(I^k)\setminus(I^{k-1}\times\{1\})$ and $\psi_{\mathbf{q}}$ preserves $L$ for all $\mathbf{q}\in\partial(I^k)$. We call the resulting $L\times I^k\subset X\times T^*I^k$ the {\em Lagrangian suspension of $L$ along $\psi_{\mathbf{q}}$}. The case $k=0$ is the usual Lagrangian suspension.

In the following we let $I=[0,1]$ denote the unit interval and identify $T^*[0,1]$ with $\RR\times[0,1]$. We write $\theta_{I^k}$ for the canonical 1-form on $T^*I^k$. We will only need the case when $(X,\omega)$ is a simply-connected symplectic manifold. We can therefore omit the requirements of being Hamiltonian in the following section.

\begin{prp}
Suppose we have a family $\psi_{\mathbf{q}}$ of symplectomorphisms of a simply-connected symplectic manifold $X$, parametrised by $\mathbf{q}\in I^k$, for which there is a nonempty open subset $U\subset X$ such that $\psi_{\mathbf{q}}|_U=\OP{id}$ for all $\mathbf{q}$. Then there exists a symplectomorphism
\begin{gather*}
\Psi \colon (X \times T^*I^k,\omega \oplus d\theta_{I^k}) \to (X \times T^*I^k,\omega \oplus d\theta_{I^k}),\\
(x,\mathbf{q},\mathbf{p}) \mapsto (\psi_{\mathbf{q}}(x),\mathbf{q},\mathbf{p}-\mathbf{H}(x,\mathbf{q})),
\end{gather*}
for a smooth function
\[\mathbf{H} \colon X \times I^k \to \RR^k\]
which may be taken to vanish on $U \times I^k$. We call $\Psi$ the suspension of $\psi_{\mathbf{q}}$.

Moreover, if $\psi_{\mathbf{q}}(L)=L$ for a Lagrangian submanifold $L \subset (X,\omega)$ and all $\mathbf{q}\in A$ for a subset $A \subset I^k$, it follows that $\mathbf{H}$ is constant along each subset $L \times \{\mathbf{q}\}$ with $\mathbf{q} \in A$.
\end{prp}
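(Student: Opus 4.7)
The plan is to extend the standard Lagrangian suspension construction (the $k=1$ Hamiltonian-isotopy case) to a $k$-parameter family of symplectomorphisms of a simply-connected $X$. A direct coordinate calculation shows that the map $\Psi$ in the statement preserves $\omega \oplus d\theta_{I^k}$ if and only if the components $H_j$ of $\mathbf{H}$ satisfy (i) $d_X H_j(\cdot, \mathbf{q}) = -\iota_{W_j}\omega$, where $W_j := (\psi_{\mathbf{q}}^{-1})_* V_j$ is the pullback of the infinitesimal generator $V_j := \partial_{q_j}\psi_{\mathbf{q}} \circ \psi_{\mathbf{q}}^{-1}$, together with (ii) the cross-term compatibility $\partial_{q_i}H_j - \partial_{q_j}H_i = \omega(V_i, V_j) \circ \psi_{\mathbf{q}}$.

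To build such an $\mathbf{H}$, observe that each $V_j(\cdot, \mathbf{q})$ is a symplectic vector field on $X$ because $\psi_{\mathbf{q}}^*\omega = \omega$. Simple-connectedness of $X$ (i.e.\ $H^1(X;\RR) = 0$) yields smooth Hamiltonians $K_j \co X \times I^k \to \RR$ with $\iota_{V_j}\omega = dK_j$. Since $V_j$ vanishes on $U$, each $K_j$ is locally constant there, and after subtracting a function of $\mathbf{q}$ alone one may arrange $K_j \equiv 0$ on $U \times I^k$ (shrinking $U$ to one of its connected components if necessary). Now set $H_j(x, \mathbf{q}) := -K_j(\psi_{\mathbf{q}}(x), \mathbf{q})$; this vanishes on $U \times I^k$ because $\psi_{\mathbf{q}}$ fixes $U$ pointwise. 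Condition (i) then follows from the chain rule together with $\psi_{\mathbf{q}}^*\omega = \omega$. Condition (ii) translates, via the identity $\partial_{q_i}V_j - \partial_{q_j}V_i = [V_j, V_i]$ (which holds automatically for any smooth family of diffeomorphisms), into a relation on the $K_j$'s that holds up to a function of $\mathbf{q}$ alone; that ambiguity is killed by the normalisation on $U$.

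For the last claim, suppose $\psi_{\mathbf{q}}(L) = L$ for all $\mathbf{q} \in A$. Differentiating in $q_j$ shows that $V_j(\cdot, \mathbf{q})$ is tangent to $L$ along $L$; since $L$ is Lagrangian, $dK_j|_{TL} = (\iota_{V_j}\omega)|_{TL} = 0$, so $K_j(\cdot, \mathbf{q})$ is locally constant on $L$. Because $\psi_{\mathbf{q}}(L) = L$, the function $H_j(x, \mathbf{q}) = -K_j(\psi_{\mathbf{q}}(x), \mathbf{q})$ is then locally constant on $L \times \{\mathbf{q}\}$ for $\mathbf{q} \in A$, giving the asserted constancy on each connected component (and full constancy when $L$ is connected).

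The main technical nuisance is verifying the multi-parameter compatibility (ii); the key observation is that the hypothesis $\psi_{\mathbf{q}}|_U = \id$ pins down the otherwise cohomologically ambiguous integration constants in the $K_j$'s, so that the suspension formula familiar from the one-parameter case extends coherently to the full $k$-parameter family.
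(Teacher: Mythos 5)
Your proposal is correct and follows essentially the same route as the paper: the paper integrates the closed one-forms $\alpha_i$ appearing in the pullback discrepancy $\eta$ (these are exactly your $-\iota_{W_i}\omega$) using simple-connectedness of $X$, fixes the integration constants by the vanishing on $U$, and uses that same normalisation to kill the residual $dq_i \wedge dq_j$ ambiguity, while the final Lagrangian statement follows in both arguments from the vanishing of these one-forms on $TL$. The only difference is bookkeeping: you phrase the construction through the generating vector fields $V_j$ and their Hamiltonians $K_j$ (with the Maurer--Cartan identity handling the cross-terms) rather than through closedness of $\eta$.
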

\begin{proof}
The smooth map
\[(x,\mathbf{q},\mathbf{p}) \mapsto (\psi_{\mathbf{q}}(x),\mathbf{q},\mathbf{p})\]
is unfortunately not symplectic, since the pull-back of $\omega \oplus d\theta_{I^k}$ is of the form
\begin{gather*}
\omega \oplus d\theta_{I^k} - \eta, \\
\eta:=\sum_{i=1}^k dq_i \wedge \alpha_i(x,\mathbf{q})+\sum_{i,j=1}^k f_{i,j}(x,\mathbf{q})dq_i \wedge dq_j.
\end{gather*}
Here $\alpha_i$ is a $k$-parameter family of one-forms on $X$ and $f_{i,j} \co X \times I^k \to \RR$ are functions satisfying $f_{i,j}=-f_{j,i}$. The fact that both the forms $\omega \oplus d\theta_{I^k} - \eta$ and $\omega \oplus d\theta_{I^k}$ are symplectic, and hence that $\eta$ is closed, implies that $d_x \alpha_i=0$, where $d_x$ denotes the exterior derivative on $X$. Consequently, since $X$ is assumed to be simply-connected, $\alpha_i=d_x H_i(x,\mathbf{q})$ for some smooth function
\[\mathbf{H}=(H_1,\hdots,H_k) \colon X \times I^k \to \RR^k\]
uniquely determined by the requirement that it vanishes on $U \times I^k$. Here we have used the fact that $\eta$, and hence $\alpha_i$, vanishes on $U \times I^k$.

Closedness moreover implies that
\[d_x(f_{i,j}-f_{j,i})=\partial_{q_j} \alpha_i(x,\mathbf{q})-\partial_{q_i} \alpha_j(x,\mathbf{q})=\partial_{q_j} d_x H_i-\partial_{q_i} d_x H_j.\]
Changing the order of the partial derivations, we obtain that
\[f_{i,j}-f_{j,i}=\partial_{q_j}H_i-\partial_{q_i}H_j+F_{i,j}(\mathbf{q}),\]
where $F_{i,j}(\mathbf{q})$ vanishes since $\eta$, and hence $f_{i,j}$, vanishes on $U \times I^k$. It is now easy to check that the modified diffeomorphism $\Psi$ preserves the symplectic form.

Suppose that $L \subset (X,\omega)$ is a Lagrangian submanifold preserved set-wise by $\psi_{\mathbf{q}_0}$.  The Lagrangian condition implies that $\eta$ vanishes along $L \times \{ \mathbf{q}_0\}$ and, hence, so do the one-forms $\alpha_i$ when pulled back to $L \times \{\mathbf{q}_0\} \subset X \times\{\mathbf{q}_0\}$. It follows that $\mathbf{H}$ is constant along $L \times \{ \mathbf{q}_0\}$.
\end{proof}

\begin{rmk}\label{rmk:const}
Note that one can replace $\mathbf{H}$ by $\mathbf{H}(x,\mathbf{q})+\mathbf{F}(\mathbf{q})$ for an arbitrary function $\mathbf{F}(\mathbf{q})$ and one still gets a symplectomorphism. If $\psi_{\mathbf{q}}(L)=L$ then $\mathbf{H}$ is constantly equal to $\mathbf{C}(\mathbf{q})$ along $L\times\{\mathbf{q}\}$. We can then choose $\mathbf{F}(\mathbf{q})=-\mathbf{C}(\mathbf{q})$ to ensure that $\Psi(L,\mathbf{q},0)=(L,\mathbf{q},0)$.
\end{rmk}

\begin{dfn}[Lagrangian suspension]
Let $\iota\co L\to X$ be a Lagrangian embedding and $\psi_{\mathbf{q}}$ be a family of symplectomorphisms parametrised by $\mathbf{q}\in I^k$. Suppose that $\psi_{\mathbf{q}}(L)=L$ for all $\mathbf{q}$ in a neighbourhood of $\partial I^k$. Then the Lagrangian
\[\Sigma L:= \Psi(L\times I^k)\subset X\times T^*I^k\]
is called the {\em Lagrangian suspension} of $L$ along $\psi_{\mathbf{q}}$. By Remark \ref{rmk:const} one can ensure that $\Psi(L)$ agrees with $L\times I^k$ in a neighbourhood of the boundary.
\end{dfn}

\begin{rmk}\label{rmk:param}
We will further assume that $\psi_{\mathbf{q}}=\id$ for $\mathbf{q}\in\partial I^k\setminus(I^{k-1} \times \{1\})$ and that the map $I^{k-1} \times \{1\} \to\OP{Diff}(L)$ which sends $\mathbf{q}$ to $\psi_{\mathbf{q}}|_L$ represents a given homotopy class $\alpha\in\pi_{k-1}(\OP{Diff}(L))$.
\end{rmk}

\subsection{Open symplectic embeddings in $T^*S^N$}\label{sec:openemb}

Let
\[A^n_m:=\left\{\prod_{j=0}^m(x_0-j)+x_1^2+\cdots+x_n^2=0\right\}\subset\CC^{n+1}.\]
denote the complex $n$-dimensional $A_m$-Milnor fibre. In the proofs of the main results we will continually need to appeal to the existence of certain open symplectic embeddings of open subsets of $A^n_m\times T^*I^{k+1}$ into $T^*S^{n+k+1}$. In this section we will establish the result we need.

Consider the Lefschetz fibration
\[\pi^n_m\colon A^n_m\to\CC,\quad\pi^n_m(x_0,\ldots,x_n)=x_0\]
with $m+1$ critical points at $0,1,\ldots,m$. Given an arc $\gamma$ in $\CC$ connecting two critical points, denote by $L_{\gamma}$ the Lagrangian matching cycle fibring over $\gamma$. We write $\gamma_i\colon[0,1]\to\CC$, $i=1,\ldots,m$, for the arc $\gamma_i(t)=i+t-1$ (considered as a matching path for $\pi^n_m$). For brevity we will write $L_i$ for the matching sphere $L_{\gamma_i}\subset A^n_m$.

\begin{prp}\label{prp:openemb}
For all $k,n\geq 0$ there is an open symplectic embedding of an open neighbourhood $W$ of $\bigcup_{i=1}^mL_i\times I^{k+1}\subset A^n_m\times T^*I^{k+1}$ into $T^*S^{n+k+1}$ such that:
\begin{itemize}
\item $L_1\times I^{k+1}$ is sent to a subset of the zero-section $S^{n+k+1}$;
\item the image of the embedding is disjoint from a particular cotangent fibre $\Lambda\subset T^*S^{n+k+1}$.
\end{itemize}
\end{prp}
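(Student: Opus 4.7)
My plan is to construct the embedding in two stages. In the first stage, I fix a smooth embedding $\iota \colon S^n \times I^{k+1} \hookrightarrow S^{n+k+1}$ into the zero-section via the standard join decomposition $S^{n+k+1} = S^n \ast S^k$ (restricting to $S^n \times D^{k+1} \subset S^{n+k+1}$ and then to $S^n \times I^{k+1} \subset S^n \times D^{k+1}$), chosen so that its image is disjoint from a pre-selected point $q_0 \in S^{n+k+1}$ lying on the complementary $D^{n+1} \times S^k$. Taking $\Lambda := T^*_{q_0}S^{n+k+1}$ then automatically ensures the second bullet of the proposition, provided the image of the eventual open symplectic embedding stays close enough to $\iota(S^n \times I^{k+1})$. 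Applying the Weinstein neighbourhood theorem to the Lagrangian-with-corners $\iota(S^n \times I^{k+1})$ inside $T^*S^{n+k+1}$ produces an open symplectic embedding $\Theta \colon V \hookrightarrow T^*S^{n+k+1}$, where $V$ is an open neighbourhood of the zero-section in $T^*(S^n \times I^{k+1}) = T^*S^n \times T^*I^{k+1}$, and the zero-section is sent into the zero-section of $T^*S^{n+k+1}$.

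In the second stage, the task reduces to constructing an open symplectic embedding of a neighbourhood of $\bigcup_{i=1}^m L_i \times I^{k+1} \subset A^n_m \times T^*I^{k+1}$ into $V$ sending $L_1 \times I^{k+1}$ to the zero-section $S^n \times I^{k+1}$. For $m=1$ this is immediate, since $A^n_1 = T^*S^n$ (after the linear change of coordinates $y = x_0 - 1/2$) and $L_1$ is the zero-section. For $m \geq 2$ I would proceed inductively: assuming the embedding is built for $\bigcup_{i<m} L_i \times I^{k+1}$, the new sphere $L_m$ meets $L_{m-1}$ transversely in a single point $p$, and in a Darboux chart at $p$ inside $A^n_m$ it appears as the cotangent fibre to $L_{m-1}$. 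After applying the inductive embedding, the local piece of $L_m \times I^{k+1}$ becomes a short Lagrangian disc in $V$ meeting the already-embedded $L_{m-1} \times I^{k+1}$ cleanly along $\{p\} \times I^{k+1}$. To extend this local disc to the image of the full $L_m \times I^{k+1}$, I would apply the Lagrangian suspension construction from Section~\ref{sec:lagsusp}: a carefully chosen compactly-supported family of Hamiltonian isotopies of $T^*S^n$ parametrised by $I^{k+1}$ closes up the cotangent disc into an embedded Lagrangian sphere, taking $V$ large enough in the $T^*$-directions to contain the closing-up.

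The main obstacle — and the structural reason the $T^*I^{k+1}$ factor is indispensable — is topological: in $T^*S^n$ alone, no $A_m$-chain of closed Lagrangian spheres exists for $m \geq 2$, since any two Lagrangian spheres Hamiltonian isotopic to the zero-section must meet in at least two points (by Morse theory applied to the generating difference function on $S^n$, which has at least two critical points). Consequently the closing-up of each cotangent disc into a sphere $L_m$ genuinely requires the $T^*I^{k+1}$ directions. The technically delicate point in the argument is the inductive step: one must verify that the successive closings-up can be performed while keeping the entire image inside the fixed Weinstein neighbourhood $V$ and without introducing spurious intersections between the newly attached $L_m \times I^{k+1}$ and the previously embedded Lagrangians, which I expect to be where the bulk of the work lies.
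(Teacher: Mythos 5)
Your first stage is fine: since $S^n\times I^{k+1}$ is a compact piece of the zero-section, its Weinstein neighbourhood is simply the union of the cotangent fibres over a slightly larger open set, so it is symplectomorphic to an open subset of $T^*S^n\times T^*\RR^{k+1}$ and automatically misses $\Lambda=T^*_{q_0}S^{n+k+1}$. The gap is in the second stage, and it is structural. The Lagrangian suspension of a Lagrangian $L$ along a family of symplectomorphisms parametrised by $I^{k+1}$ is, by construction, the graph-type Lagrangian $\Psi(L\times I^{k+1})$ and is therefore diffeomorphic to $L\times I^{k+1}$; it never changes the topology of the fibre $L$. Suspending the local cotangent disc at $p$ thus produces a Lagrangian $D^n\times I^{k+1}$, not the closed $S^n\times I^{k+1}$ you need for $L_m\times I^{k+1}$: there is no way to ``close up'' a disc into a sphere by this construction. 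Moreover, as your own last paragraph observes, the closing-up cannot take place inside a single fibre $T^*S^n\times\{\mathrm{pt}\}$ either (a closed Lagrangian sphere in $T^*S^n$ meeting the zero-section transversely in one point would contradict the Floer-theoretic intersection bounds), so the cores $L_i\times I^{k+1}$ for $i\geq 2$ must be spread over the $T^*I^{k+1}$ factor in an essentially non-product way. No tool you have invoked produces such a Lagrangian, let alone with the correct clean intersection with $L_{i-1}\times I^{k+1}$ and the correct germ of symplectic neighbourhood --- which you would additionally have to verify in order to upgrade an embedding of the union of the cores to an embedding of the open neighbourhood $W$ of the skeleton.

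The paper sidesteps all of this by \emph{not} reducing to $T^*S^n\times T^*I^{k+1}$: it reduces only the parameter directions, keeping the target $T^*S^{n+1}$, and then uses the Lefschetz fibration $p_m$ on $T^*S^{n+1}$ obtained by stabilising the standard two-critical-point fibration $m-1$ times. The fibre of $p_m$ is $A^n_m$, the zero-section is the matching sphere over $[0,1]$, and symplectic parallel transport over the rectangle $\{x+iy:\ x\in[\epsilon,1-\epsilon],\ |y|\leq 1\}$, which contains no critical values, trivialises the fibration there over a compact neighbourhood $W'$ of the whole skeleton $\bigcup_i L_i$ at once; the cotangent fibre to be avoided is the Lefschetz thimble over $(-\infty,0]$. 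The one sphere direction that the paper retains in the target is exactly what allows the chain $L_2,\ldots,L_m$ to wrap around away from the zero-section. To salvage your inductive scheme you would have to replace ``Lagrangian suspension'' by a genuine construction of the chain of $S^n\times I^{k+1}$'s together with their neighbourhoods, which amounts to re-deriving the stabilisation/matching-cycle picture by hand.
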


\begin{proof}
The proposition follows from the case $k=0$ because there is an open symplectic embedding of $T^*S^{n+1}\times T^*I^k$ into $T^*S^{n+k+1}$: take the left-inverse of the pullback along the open inclusion $S^{n+1}\times I^k\to S^{n+k+1}$ of a tubular neighbourhood of a sphere $S^{n+1}\subset S^{n+k+1}$. Note that the image of this embedding is disjoint from $\Lambda=T^*_xS^{n+k+1}$ for any $x$ not contained in the tubular neighbourhood of $S^{n+1}$.

\par {\em Case $k=0$.} The manifold $T^*S^{n+1}$ admits a sequence of Lefschetz fibrations $p_m$, $m=1,2,3\ldots$, where:
\begin{itemize}
 \item $p_1=\pi^{n+1}_1\colon A^{n+1}_1=T^*S^{n+1}\to\CC$ is the standard Lefschetz fibration with two singular fibres, whose general fibre is $A^n_1$.
\item $p_m$ is obtained from $p_1$ by stabilising $m-1$ times: the smooth fibre is $A_m^n$ and there are $m+1$ singular fibres living over the critical points $0,1,\ldots,m$.
\end{itemize}
The vanishing cycles associated to the $m+1$ critical points of the fibration $p_m$ are
\[L_1,\ L_1,\ L_2,\ \cdots \ L_{m-1},\ L_m\subset A^n_m.\]
Moreover $\gamma_1$ defines a matching path for $\pi_m^{n+1}$ whose matching sphere is the zero-section in $T^*S^{n+1}$. See Figure \ref{fig:leffib}.

\begin{figure}[htb]
\begin{center}
\labellist
\pinlabel $\color{black}{A^n_2}$ at 270 230
\pinlabel $\color{red}{L_1}$ at 125 200
\pinlabel $\color{black}{L_2}$ at 125 175
\pinlabel $\color{red}{S^{n+1}}$ at 210 220
\endlabellist
\includegraphics[width=200px]{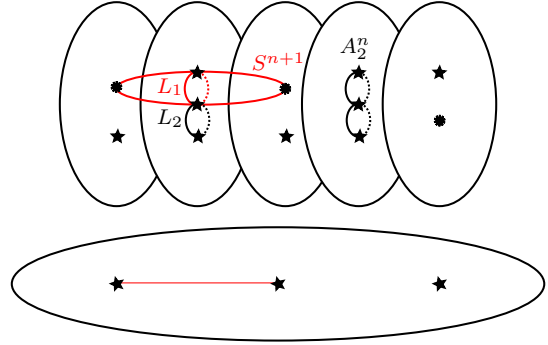}
\caption{The Lefschetz fibration $p_2$ on $T^*S^{n+1}$.}
\label{fig:leffib}
\end{center}
\end{figure}

By symplectic parallel transport, we can try to trivialise $p_m$ over a compact subset $D^*I=\{x+iy\in\CC\ :\ x\in[\epsilon,1-\epsilon],\ |y|\leq 1\}\subset\CC$. Since the fibres are noncompact then, {\em a priori}, we are only able to achieve this over some compact neighbourhood $W'$ of the skeleton $\bigcup_{i=1}^nL_i\subset A^n_m$. This gives us a symplectic embedding of $W=W'\times D^*I$ into $T^*S^{n+1}$ and by construction $L_1\times I$ is identified with a subset of the zero-section (the matching sphere for $\gamma_1$).

There is a cotangent fibre of $T^*S^{n+1}$ which arises as the Lefschetz thimble for $p_m$ living over the ray $(-\infty,0]$, and which hence is disjoint from $p_m^{-1}(D^*I)$.
\end{proof}

\subsection{Proof of Proposition \ref{prp:localknot}}\label{sct:localknot}

The proposition will follow by combining several previously known results concerning the smooth structure of certain exact Lagrangian homotopy spheres. The first result in this direction is due to Abouzaid, who in \cite{Abouzaid} showed that 
\begin{thm}\label{thm:abouzaid}
If $\Sigma\to T^*S^n$ is an exact Lagrangian embedding of a homotopy $n$-sphere, and if $n\geq 9$, $n\equiv 1\mod 4$, then $\Sigma$ bounds a parallelisable manifold.
\end{thm}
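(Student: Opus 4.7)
The plan is to reduce Theorem~\ref{thm:abouzaid} to a Floer-theoretic computation of a Pontryagin--Thom class of $\Sigma$ in the stable stem $\pi_n^S$, and then invoke the Kervaire--Milnor identification of $bP_{n+1}$ with the kernel of the map from $\Theta_n$ into $\pi_n^S/\mathrm{im}(J)$. Concretely, if I can produce from the Lagrangian embedding enough data to evaluate $[\Sigma]\in\pi_n^S$ modulo $\mathrm{im}(J)$, and show that this class agrees with the standard sphere's, then $\Sigma$ lies in $bP_{n+1}$.

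First, I would use the existing structural understanding of exact Lagrangians in cotangent bundles -- vanishing of the Maslov class, and Floer-indistinguishability from the zero-section -- to arrange, after a compactly supported Hamiltonian isotopy, that $\Sigma$ meets a chosen cotangent fibre $F$ transversely in a single point $p$. In particular $CF^*(\Sigma,F)$ is rank one and concentrated in a single degree, and as an $A_\infty$-module over the wrapped Floer algebra of $F$ it is quasi-isomorphic to the corresponding module for the zero-section.

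The heart of the argument would be to build a framed bordism representative of $[\Sigma]$ out of pseudoholomorphic disks. For a generic tame almost-complex structure, consider the moduli space $\mM$ of finite-energy $J$-holomorphic disks in $T^*S^n$ with boundary on $\Sigma$, a boundary marked point sent to $p$, and a free interior marked point, in a distinguished homotopy class determined by the generator of $HF^*(\Sigma,F)$. The linearised $\dbar$-operator is Fredholm of index $n$; once relative orientations and framings are fixed coherently, $\mM$ is a closed framed $n$-manifold, and interior evaluation gives a degree-one map $\ev\co\mM\to\Sigma$. The induced stable framing on $\mM$ realises the Pontryagin--Thom class of $\Sigma$ in $\pi_n^S$. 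Running the same construction on the zero-section and matching the outputs via the $A_\infty$-module equivalence above forces $[\Sigma]$ and $[S^n]$ to agree modulo $\mathrm{im}(J)$; hence $\Sigma\in bP_{n+1}$.

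The hardest part will be the analytic and algebraic bookkeeping: proving the requisite transversality and Gromov compactness so that $\mM$ is a genuine closed manifold; choosing coherent relative Pin structures compatible with the $A_\infty$-operations so that the count is signed and well-defined; and identifying the disk-theoretic output with the stable-homotopy invariant that controls $\Theta_n/bP_{n+1}$. The hypothesis $n\geq 9$ places us in the stable range where the Kervaire--Milnor sequence is sharp and Haefliger-type embedding results are available, while $n\equiv 1\bmod 4$ is used to align orientation data with a genuine $\ZZ$-valued count and to make the comparison with $\mathrm{im}(J)$ effective. Extending the argument outside of this residue class would reintroduce additional $2$-torsion phenomena (e.g.\ a Kervaire/Arf obstruction) that the disk count does not in itself resolve, and is presumably where the complementary work of Abouzaid--Kragh and Ekholm--Smith cited for Proposition~\ref{prp:localknot} takes over.
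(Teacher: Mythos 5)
This statement is not proved in the paper at all: it is quoted verbatim as a theorem of Abouzaid, with \cite{Abouzaid} (his Annals paper on framed bordism and Lagrangian embeddings of exotic spheres) cited as the source, and the present paper uses it as a black box in the proof of Proposition \ref{prp:localknot}. So there is no internal argument to compare yours against; the relevant comparison is with Abouzaid's paper, whose strategy your sketch does broadly echo (extracting framed bordism data from moduli spaces of holomorphic disks with boundary on $\Sigma$ and feeding it into the Kervaire--Milnor framework). As a proof, however, your proposal is a program rather than an argument: every load-bearing step --- transversality and compactness of $\mM$, coherent orientations and (relative) Pin/framing data, and above all the identification of the disk-theoretic output with the stable-homotopy invariant detecting $\Theta_n/bP_{n+1}$ --- is named in your final paragraph and then deferred. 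That identification is precisely the content of Abouzaid's paper; it cannot be waved through via ``the $A_\infty$-module equivalence forces $[\Sigma]$ and $[S^n]$ to agree modulo $\mathrm{im}(J)$'', since a quasi-isomorphism of Floer modules is a chain-level statement and does not by itself transfer smooth framed-bordism information.

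There is also a structural error in the middle of the sketch. You assert that $\mM$ is a \emph{closed} framed $n$-manifold with a degree-one evaluation map $\ev\co\mM\to\Sigma$, and that its framing ``realises the Pontryagin--Thom class of $\Sigma$''. Neither half of this is right as stated: the Gromov compactification of such a disk moduli space has boundary (and corner) strata --- in Abouzaid's setting a copy of $\Sigma$ itself appears in the boundary via constant disks, and the theorem is proved by exhibiting the compactified moduli space as a (stably) parallelisable manifold \emph{bounded by} $\Sigma$, not by computing a closed invariant. Moreover, stable framings do not push forward along degree-one maps, so even granting a closed framed $\mM$ with $\ev$ of degree one, you would have computed an invariant of $\mM$, not of $\Sigma$; relating the moduli-theoretic framing (from the determinant lines of the $\dbar$-operators) to the framing of $\Sigma$ that enters the Kervaire--Milnor sequence is exactly the bordism you have skipped. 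Finally, the roles you assign to $n\geq 9$ and $n\equiv 1\bmod 4$ are plausible-sounding but unverified; in particular the congruence condition in Abouzaid's theorem arises from the specific behaviour of the relevant framing obstruction groups in those dimensions, and your sketch gives no mechanism by which it enters. If you want to use this statement, cite \cite{Abouzaid} as the paper does rather than attempting to reprove it.
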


Recall that the Whitney immersion is an exact Lagrangian immersion $w\co S^n\looparrowright\CC^n$ with one transverse double point and that the stable Lagrangian Gauss map of a Lagrangian immersion $\Sigma\looparrowright\CC^n$ is a map $\Sigma\to U/O$ which defines an element of $\pi_n(U/O)$ if $\Sigma$ is a homotopy sphere. In any dimension $n \ge 1$, Ekholm-Smith \cite{EkholmSmith} prove that:
\begin{thm}\label{thm:ekholmsmith}
If $\Sigma\looparrowright\CC^n$ is an exact Lagrangian immersion of a homotopy sphere having one transverse double point and satisfying the conditions that:
\begin{itemize}
\item the Legendrian contact homology grading of the double-point of $\Sigma$ coincides with that of the Whitney immersion (i.e.~it is two), and
\item the stable Lagrangian Gauss map of $\Sigma$ is homotopic to that of the Whitney immersion (i.e.~it is null homotopic),
\end{itemize}
then $\Sigma$ bounds a parallelisable manifold.
\end{thm}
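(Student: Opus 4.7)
The plan is to follow Ekholm--Smith's strategy, which combines the Gromov--Lees h-principle with a count of $J$-holomorphic disks having a boundary corner at the double point. The stable Lagrangian Gauss map hypothesis, via the Gromov--Lees h-principle for Lagrangian immersions, provides a regular homotopy of exact Lagrangian immersions from $\Sigma$ to the Whitney immersion $w$, through generic Lagrangian immersions with isolated transverse self-intersections. Lifting $\Sigma$ to a Legendrian sphere in $\CC^n \times \RR$ using exactness, the double point becomes a Reeb chord whose Conley--Zehnder/Maslov index coincides with the Legendrian contact homology grading; the grading hypothesis says this index agrees with the one for $w$.

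The core construction is the moduli space $\mathcal{M}$ of $J$-holomorphic disks $(D^2,\partial D^2) \to (\CC^n, \Sigma)$ with one boundary corner at the double point (equivalently, one boundary puncture on the Legendrian lift asymptotic to the Reeb chord). By the index-matching, $\mathcal{M}$ has expected dimension $n$ and its formal structure mirrors that of the well-understood Whitney moduli. The universal disk over $\mathcal{M}$ with a free boundary marked point is an $(n+1)$-manifold $\mathcal{W}$ equipped with an evaluation map to $\CC^n$ whose restriction to the boundary gives a degree-one map $\partial\mathcal{W} \to \Sigma$. Since $\mathcal{W}$ immerses into $\CC^n$ and the tangent bundle of $\CC^n$ is trivial, $T\mathcal{W}$ is stably trivial; hence $\mathcal{W}$ is a parallelisable $(n+1)$-manifold bounded by $\Sigma$, as required.

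The technical heart --- and the main obstacle --- is the compactness and transversality analysis for $\mathcal{M}$. Transversality is arranged by a generic choice of $J$. Compactness requires excluding: sphere bubbling in the interior, ruled out by exactness of the symplectic form; boundary disk bubbling without corners, ruled out by exactness of the Lagrangian immersion, which precludes a non-constant disk with boundary on $\Sigma$ and no Reeb-chord corner; and breaking of the Reeb chord corner into two or more shorter strips, which is ruled out by the grading hypothesis, because this hypothesis places the chord at the minimal Maslov index among contributions that could appear as strips in a broken configuration. The degree-one claim for the boundary evaluation is verified by deforming along the regular Lagrangian homotopy from $\Sigma$ to $w$ produced in the first step and comparing with the explicit Whitney disk count; the h-principle input is what makes this deformation argument available. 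This compactness-and-deformation analysis is where the full strength of Ekholm's SFT-style bubbling machinery becomes indispensable, and both hypotheses in the statement are exactly what is needed to keep the argument on the same footing as the Whitney model case.
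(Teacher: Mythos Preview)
The paper does not prove this theorem. It is quoted verbatim as a result of Ekholm--Smith \cite{EkholmSmith} and then applied as a black box in the proof of Proposition~\ref{prp:localknot}. So there is no ``paper's own proof'' to compare against; the authors simply import the statement.

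Your sketch is a reasonable high-level summary of the Ekholm--Smith strategy, but a couple of points are not quite right and would matter if you actually tried to carry this out. First, the claim that $\mathcal{W}$ ``immerses into $\CC^n$'' is not how parallelisability is obtained: the evaluation map from the universal disk is not an immersion in general. In Ekholm--Smith the stable triviality of $T\mathcal{W}$ comes from trivialising the index bundle of the linearised $\overline{\partial}$-operator, and it is precisely the null-homotopy of the stable Gauss map that makes this trivialisation possible. Second, the conclusion is stronger than ``$\partial\mathcal{W}\to\Sigma$ has degree one'': after the appropriate compactification and gluing, the boundary of the moduli space is identified with $\Sigma$ itself (up to a framed cobordism), not merely mapped to it. A degree-one map alone would not let you conclude that $\Sigma$ bounds a parallelisable manifold. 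These are the places where your outline, while pointing in the right direction, glosses over the genuine content of the Ekholm--Smith argument.
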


\begin{proof}[of Proposition \ref{prp:localknot}]
When $n \geq 9$, $n\equiv 1\mod 4$, the result follows a fortiori by Theorem \ref{thm:abouzaid}. We are thus left with the cases $n=4$, and $n \ge 8$ satisfying $n \not\equiv 1 \mod 4$ (see Remark \ref{rmk:poincare}), for which the goal is to apply Theorem \ref{thm:ekholmsmith}.

Assume that $\Sigma\in\mathcal{L}_{n}$, let $\Lambda$ be a cotangent fibre in $T^*S^n$, and let $\ell\co\Sigma\to T^*S^n$ be a Lagrangian embedding which intersects $\Lambda$ once transversely. Without loss of generality we can assume that $\ell$ coincides with the zero section in a neighbourhood $U$ of $\Lambda$, since this can be achieved by a Hamiltonian isotopy. We can then push $\ell$ forward into an immersed Weinstein neighbourhood $f \co D^*S^n \looparrowright \CC^n$ of the Whitney immersion to obtain a Lagrangian immersion $\tilde{w}:=f\circ \ell \co\Sigma\looparrowright\CC^n$. Assuming that the preimages in $S^n$ of the double point lie in the neighbourhood $U$, the immersion we construct has precisely one double point, the grading of which moreover coincides with that of the Whitney immersion.

In all dimensions $n\equiv 0\mod 2$ or $n\equiv 7\mod 8$ the stable Lagrangian Gauss map is nullhomotopic because $\pi_n(U/O)=0$. Therefore, $\Sigma$ bounds a parallelisable manifold by Theorem \ref{thm:ekholmsmith}.

We are left with the case $n\equiv 3\mod 8$. In this case we have $\pi_n(U)=\ZZ$ and $\pi_n(U/O)=\ZZ/2$. If $\Sigma$ is a homotopy sphere then the isotopy classes of exact Lagrangian immersions $\ell\colon \Sigma\looparrowright T^*S^n$ are in bijection with $\pi_n(U)=\ZZ$; let us write $m(\ell)$ for the integer associated with $\ell$. Abouzaid and Kragh show that if $\ell\colon \Sigma\to T^*S^n$ is an exact Lagrangian embedding and $n\equiv 3\mod 8$ then $m(\ell)$ is even. In more detail, they construct a sequence of homomorphisms {\cite[Eq 2.16]{AbouzaidKragh}}
\[\ZZ=\pi_n(U)\stackrel{\cong}{\to}\pi_{n-1}(\ZZ\times BU)\to\pi_{n-1}(\ZZ\times BO)\to\pi_{n-1}(BH)=\ZZ/2\]
where $H$ is the stable group of self-homotopy equivalences of spheres and show that (a) when $n\equiv 3\mod 8$ the composite map $\ZZ\to\ZZ/2$ is just reduction modulo 2 {\cite[Table 1, $k=2$, last line]{AbouzaidKragh}} and (b) if $\ell$ is an embedding then $m(\ell)$ is in the kernel of this composition {\cite[Proposition 2.2]{AbouzaidKragh}}.

Now, as before, we consider the Weinstein neighbourhood $f\colon D^*S^n\looparrowright\CC^n$ of the Whitney immersion and take the composition $f\circ\ell$ to get a Lagrangian immersion of $\Sigma$ in $\CC^n$. The homotopy class of the stable Lagrangian Gauss map of $\Sigma\looparrowright\CC^n$ is just $m(\ell)\mod 2$ in $\pi_n(U/O)=\ZZ/2$. From the previous paragraph, we deduce that if $\ell$ is an embedding then the homotopy class of the stable Gauss map of $f\circ \ell$ is zero, and hence, by Theorem \ref{thm:ekholmsmith}, $\Sigma$ bounds a parallelisable manifold.
\end{proof}

\section{Proof of Theorem \ref{thm:maintheorem}}\label{sec:maintheorem}

Let $\phi_{\mathbf{s}}\co S^n\to S^n$ be a family of diffeomorphisms depending on a parameter $\mathbf{s}\in I^k$ representing the homotopy class $\alpha\in\pi_k(\OP{Diff}(S^n))$. We first show that we do not lose any generality if we assume that there is a disc $V\subset S^n$ for which $\phi_{\mathbf{s}}|_V=\OP{id}$. Note that we may need to modify the homotopy class $\alpha$ in order to ensure this, but that we change neither the diffeomorphism type of $S^{n+k+1}_{\alpha}$ nor the homotopy class of $\tau^{-1}\tau_{\phi_{\mathbf{s}}}$ in $\pi_k(\OP{Symp}(T^*S^{n+k+1}))$ in the process (see Remarks \ref{rmk:nochangeone} and \ref{rmk:nochangetwo}). Next, we prove a lemma describing how the conjugated Dehn twist acts on a (para\-me\-tr\-ised) cotangent fibre. We then prove Theorem \ref{thm:maintheorem}.

\subsection{Modifying the homotopy class}

\begin{lma}\label{lma:modifyhtpyclass}
Let $\alpha\in\pi_k(\OP{Diff}(S^n))$ be a homotopy class represented by a family $\phi_{\mathbf{s}}\co S^n\to S^n$ of diffeomorphisms depending on a parameter $\mathbf{s}\in I^k$ such that for $\mathbf{s}\in\partial(I^k)$, $\phi_{\mathbf{s}}=\id$. Let $V\subset S^n$ be an open ball. There exists a homotopy class $\alpha'\in\pi_k(\OP{Diff}(S^n))$ represented by a family $\psi_{\mathbf{s}}$ of diffeomorphisms such that
\begin{itemize}
\item $\psi_{\mathbf{s}}=\id$ for $\mathbf{s}\in\partial(I^k)$,
\item $\psi_{\mathbf{s}}|_V=\id_V$ for all $\mathbf{s}\in I^k$,
\item $\psi_{\mathbf{s}}$ is homotopic rel $\partial(I^k)$ to $\phi_{\mathbf{s}}\circ r_{\mathbf{s}}$ for some family of rotations $r_{\mathbf{s}}\in SO(n+1)$.
\end{itemize}
\end{lma}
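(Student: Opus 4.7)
The plan is to exploit the $SO(n+1)$-symmetry of $S^n$ together with the fact that, up to homotopy, the ``motion'' of the small ball $V\subset S^n$ under diffeomorphisms is captured entirely by rigid rotations. Pick a basepoint $p\in V$ and consider the restriction map
\[
\rho\co \OP{Diff}(S^n)\to\OP{Emb}(V,S^n),
\]
which is a Serre fibration by the isotopy extension theorem; its fibre over the standard inclusion is the subgroup $\OP{Diff}(S^n;V)$ of diffeomorphisms fixing $V$ pointwise. The composition $SO(n+1)\hookrightarrow\OP{Diff}(S^n)\stackrel{\rho}{\to}\OP{Emb}(V,S^n)$ is a weak homotopy equivalence: evaluating the $1$-jet at $p$ provides a weak equivalence $\OP{Emb}(V,S^n)\simeq\mathrm{Fr}^+(S^n)$ (its fibre, the space of embeddings of a ball with prescribed $1$-jet, being contractible by straightening in a local chart), while the map $SO(n+1)\to\mathrm{Fr}^+(S^n)$, $r\mapsto(r(p),dr|_p)$, is a diffeomorphism since $SO(n+1)$ acts simply transitively on oriented orthonormal frames of $TS^n$.

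First, I would use this equivalence to replace the family of embeddings $\phi_{\mathbf{s}}|_V$, relative to the constant inclusion on $\partial I^k$, by a homotopic family of the form $\tilde{r}_{\mathbf{s}}|_V$ with $\tilde{r}_{\mathbf{s}}\in SO(n+1)$ and $\tilde{r}_{\mathbf{s}}=\id$ on $\partial I^k$. Setting $r_{\mathbf{s}}:=\tilde{r}_{\mathbf{s}}^{-1}$, the family $\phi_{\mathbf{s}}\circ r_{\mathbf{s}}$ then has the property that $\rho(\phi_{\mathbf{s}}\circ r_{\mathbf{s}})$ is homotopic, rel $\partial I^k$, to the constant inclusion $V\hookrightarrow S^n$.

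Second, applying the relative homotopy lifting property of the Serre fibration $\rho$ for the CW-pair $(I^k,\partial I^k)$, I would lift this homotopy starting from $\phi_{\mathbf{s}}\circ r_{\mathbf{s}}$. The endpoint of the lifted homotopy is the desired family $\psi_{\mathbf{s}}$: it satisfies $\psi_{\mathbf{s}}|_V=\id_V$ (since the lift lies in the fibre $\OP{Diff}(S^n;V)$ at the endpoint), equals $\id$ on $\partial I^k$ because the homotopy and the lift are chosen relative to $\partial I^k$, and is homotopic rel $\partial I^k$ to $\phi_{\mathbf{s}}\circ r_{\mathbf{s}}$ by construction.

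The expected main obstacle is invoking the equivalence $SO(n+1)\simeq\OP{Emb}(V,S^n)$ and all subsequent lifts in a way compatible with the boundary conditions, so that the chosen family of rotations and the lifted homotopy are all the identity on $\partial I^k$. This is ensured by performing every construction relative to $\partial I^k$, where both sides are already pinned down to the standard inclusion or the identity; once that bookkeeping is handled correctly, the remainder is a routine application of the relative homotopy lifting property.
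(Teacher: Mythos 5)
Correct, and essentially the paper's own argument in a different packaging: the paper evaluates the $1$-jet at a point (the fibration $\OP{Diff}(S^n)\to P$ onto the frame bundle, with $SO(n+1)\hookrightarrow\OP{Diff}(S^n)\to P$ a homotopy equivalence onto the orthonormal frames) and straightens in an exponential chart at the end, whereas you restrict to the ball directly and absorb the straightening into the Cerf--Palais equivalence $\OP{Emb}(V,S^n)\simeq\mathrm{Fr}^+(S^n)$, then lift through the restriction fibration. Two small points should be made explicit: work with a compact ball $\overline{V}$ when invoking isotopy extension for the fibration property (this also excludes degenerate choices such as $V=S^n\setminus\{\mathrm{pt}\}$, for which the conclusion would fail), and the nullhomotopy of $\rho(\phi_{\mathbf{s}}\circ r_{\mathbf{s}})$ rel $\partial I^k$ is not obtained by precomposing your homotopy $\phi_{\mathbf{s}}|_V\simeq\tilde{r}_{\mathbf{s}}|_V$ with $r_{\mathbf{s}}$ (the domains do not match), but follows because $\rho_*$ is a homomorphism on $\pi_k$ and pointwise composition and inversion in the topological group $\OP{Diff}(S^n)$ represent the group operations in $\pi_k$, the case $k=0$ being the disc theorem.
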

\begin{proof}
Let $P\to S^n$ denote the principal $GL^+(n,\RR)$-frame bundle of $S^n$. Fix a point $x\in S^n$ and an orthonormal frame $p\in P_x$. There is an evaluation map $\OP{ev}\co\OP{Diff}(S^n)\to P$ which sends $\phi$ to $\phi_*p$. The space $P$ is homotopy equivalent to the subset of orthonormal frames and the composite map $F\co SO(n+1)\subset\OP{Diff}(S^n)\stackrel{\OP{ev}}{\to} P$ is a homotopy equivalence which is one-to-one onto its image, $P_O$, the subspace of orthonormal frames.

Let $f\co S^k\to \OP{Diff}(S^n)$ be a map representing the homotopy class $\alpha\in\pi_k(\OP{Diff}(S^n))$. Then $\OP{ev}\circ f\co S^k\to P$ is homotopic to a map $f_O\co S^k\to P_O$. Since $\OP{ev}$ is a homotopy fibration, the homotopy between $f$ and $f_O$ lifts to a homotopy in $\OP{Diff}(S^n)$ so there is another map $g\co S^k\to\OP{Diff}(S^n)$ representing $\alpha$ and satifying $\OP{ev}(g(S^k))\subset P_O$. Finally, define $r_{\mathbf{s}}^{-1}\in SO(n+1)$ to be $F^{-1}(\OP{ev}(g(\mathbf{s})))$. This is an $S^k$ of rotations defining some class $\beta\in\pi_k(\OP{Diff}(S^n))$ and having the properties that:
\begin{itemize}
\item each $\psi_{\mathbf{s}}:=\phi_{\mathbf{s}}\circ r_{\mathbf{s}}$ fixes the point $x$ and the frame $p$ at $x$, and
\item the map $\mathbf{s}\mapsto\psi_{\mathbf{s}}$ represents the homotopy class $\alpha'=\alpha\cdot\beta^{-1}\in\pi_k(\OP{Diff}(S^n))$.
\end{itemize}
In an exponential chart at $x$, since the frame $p$ is fixed by the derivative of $\psi_{\mathbf{s}}$, one can straighten the diffeomorphisms $\psi_{\mathbf{s}}$, canonically up to contractible choices, so that they fix not only the frame $p$ but a small neighbourhood $V$ of $x$.
\end{proof}

\begin{rmk}\label{rmk:nochangeone}
The classes $\alpha$ and $\alpha'$ differ by the class $\beta$ in $\pi_k(\OP{Diff}(S^n))$, but the rotations $r_{\mathbf{s}}$ extend to the disc $D^{n+1}$. This means that $S^{n+k+1}_{\alpha}$ and $S^{n+k+1}_{\alpha'}$ are diffeomorphic.
\end{rmk}
\begin{rmk}\label{rmk:nochangetwo}
The model Dehn twist commutes with pullback along isometries because it is defined in terms of the geodesic flow for the round metric. Therefore if $r$ is a rotation and $\ell\co S^n\to X$ is a Lagrangian embedding then $\tau_{\ell\circ\phi\circ r}=\tau_{\ell\circ\phi}$.
\end{rmk}

\begin{cor}\label{cor:wlog}
In the proof of Theorem \ref{thm:maintheorem}, we can assume without loss of generality that $\alpha$ is represented by a family of diffeomorphisms $\phi_{\mathbf{s}}$ which are the identity on some ball $V\subset S^n$.
\end{cor}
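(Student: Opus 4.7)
The plan is essentially to chain together Lemma \ref{lma:modifyhtpyclass} with Remarks \ref{rmk:nochangeone} and \ref{rmk:nochangetwo} and check that neither the hypothesis nor the conclusion of Theorem \ref{thm:maintheorem} is disturbed by the modification.

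First I would take the given family $\phi_{\mathbf{s}}$ representing $\alpha\in\pi_k(\OP{Diff}(S^n))$ and apply Lemma \ref{lma:modifyhtpyclass} to produce a new family $\psi_{\mathbf{s}}$, representing a possibly different class $\alpha'=\alpha\cdot\beta^{-1}$, such that $\psi_{\mathbf{s}}|_V=\id$ on some open ball $V\subset S^n$ and $\psi_{\mathbf{s}}$ is homotopic rel $\partial I^k$ to $\phi_{\mathbf{s}}\circ r_{\mathbf{s}}$ for some family of rotations $r_{\mathbf{s}}\in SO(n+1)$.

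Next I would verify that the hypothesis of the theorem is preserved: by Remark \ref{rmk:nochangeone} the correction class $\beta$ is represented by rotations which extend over $D^{n+1}$, hence the clutching construction gives a diffeomorphism $S^{n+k+1}_{\alpha}\cong S^{n+k+1}_{\alpha'}$, so $S^{n+k+1}_{\alpha'}\notin\mathcal{L}_{n+k+1}$ as soon as $S^{n+k+1}_{\alpha}\notin\mathcal{L}_{n+k+1}$. Then I would verify that the conclusion is equivalent for the two families: by Remark \ref{rmk:nochangetwo} the model Dehn twist commutes with pullback along rotations, so for each $\mathbf{s}$ one has
\[
\tau_{\psi_{\mathbf{s}}}=\tau_{\phi_{\mathbf{s}}\circ r_{\mathbf{s}}}=\tau_{\phi_{\mathbf{s}}},
\]
and similarly $\tau_{\ell\circ\psi_{\mathbf{s}}}=\tau_{\ell\circ\phi_{\mathbf{s}}}$ for a Lagrangian embedding $\ell\co S^n\to A^n_m$. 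Hence the two families $\tau^{-1}\tau_{\psi_{\mathbf{s}}}$ and $\tau^{-1}\tau_{\phi_{\mathbf{s}}}$ (respectively $\tau_{\ell}^{-1}\tau_{\ell\circ\psi_{\mathbf{s}}}$ and $\tau_{\ell}^{-1}\tau_{\ell\circ\phi_{\mathbf{s}}}$) define literally the same class in $\pi_k(\OP{Symp}^c)$, so it suffices to prove Theorem \ref{thm:maintheorem} for the family $\psi_{\mathbf{s}}$.

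There is no real obstacle in this corollary: it is a packaging statement rather than a substantive one, so the ``work'' is entirely in Lemma \ref{lma:modifyhtpyclass} and the two cited remarks. The only point that deserves a moment of care is confirming that Remark \ref{rmk:nochangetwo}, which is phrased for a single diffeomorphism, applies uniformly across the parameter $\mathbf{s}$; this is immediate because the construction of $\tau_{\bullet}$ from a parametrisation is pointwise in $\mathbf{s}$ and depends only on the geodesic data, which is rotation invariant.
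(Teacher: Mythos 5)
Your argument is correct and is exactly the paper's intended reading of this corollary: Lemma \ref{lma:modifyhtpyclass} supplies the straightened family $\psi_{\mathbf{s}}$, Remark \ref{rmk:nochangeone} shows the hypothesis $S^{n+k+1}_{\alpha}\notin\mathcal{L}_{n+k+1}$ is preserved, and Remark \ref{rmk:nochangetwo} shows the conclusion is unchanged. The only slip is cosmetic: $\psi_{\mathbf{s}}$ is merely homotopic rel $\partial I^k$ to $\phi_{\mathbf{s}}\circ r_{\mathbf{s}}$ (the straightening in the lemma alters it), so $\tau_{\psi_{\mathbf{s}}}$ is homotopic to, rather than equal to, $\tau_{\phi_{\mathbf{s}}\circ r_{\mathbf{s}}}=\tau_{\phi_{\mathbf{s}}}$ as a family --- which still yields the same class in $\pi_k(\OP{Symp}^c)$, as required.
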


\subsection{Action on a cotangent fibre}

The family of diffeomorphisms in the next lemma is assumed to have the property that each $\phi_{\mathbf{s}}$ is supported in an open ball $U$. By Corollary \ref{cor:wlog}, this is allowed without loss of generality.

\begin{lma}\label{lma:param}
Let $\iota_x\co\RR^n\to T^*S^n$ be a parametrisation of the cotangent fibre $T_xS^n$ and let $\phi_{\mathbf{s}}$, $\mathbf{s} \in I^k$, be a $k$-parameter family of diffeomorphisms of $S^n$ supported in a ball $U\subset S^n\setminus\{x\}$. When $k \ge 1$ we require this family to be trivial over $\partial I^k$. Then the family
\[C_{\mathbf{s}}=\tau^{-1}\phi_{\mathbf{s}}^*\tau(\phi_{\mathbf{s}}^{-1})^*\]
is homotopic in $\OP{Symp}^c(T^*S^n)$ to a family $C'_{\mathbf{s}}$ with the property that
\begin{itemize}
\item the symplectomorphism $C'_{\mathbf{s}}$ preserves $\iota_x(\RR^n)$, and 
\item the diffeomorphism $\iota_x^{-1}\circ C'_{\mathbf{s}}\circ\iota_x\co\RR^n\to\RR^n$ is a family of compactly-supported diffeomorphisms which, when extended to $S^n=\RR^n\cup\{\infty\}$, is isotopic to the family $\phi_{\mathbf{s}}$.
\end{itemize}
\end{lma}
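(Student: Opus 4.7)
The plan is to (i) use the cogeodesic flow underlying $\tau$ to build a natural diffeomorphism $\bar\sigma\co T^*_xS^n\cup\{\infty\}\to S^n$, (ii) straighten $C_{\mathbf{s}}$ via a parametric Hamiltonian isotopy so that it preserves $T^*_xS^n$, and (iii) identify the induced diffeomorphism with $\phi_{\mathbf{s}}$ via $\bar\sigma$.

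Write $\tau(q,p)=g_{r(|p|)}(q,p)$, where $g_t$ is the normalised cogeodesic flow and $r\co[0,\infty)\to[0,\pi]$ is a smooth monotone cutoff with $r(0)=\pi$ and $r=0$ outside a compact set. The assignment $\bar\sigma(p):=\exp_x(r(|p|)\hat p)$, extended by $\bar\sigma(0):=-x$ and $\bar\sigma(\infty):=x$, is a smooth diffeomorphism $T^*_xS^n\cup\{\infty\}\to S^n$ which, together with $\iota_x$, provides the identification $\RR^n\cup\{\infty\}\cong S^n$ along which the extension of $\iota_x^{-1}\circ C'_{\mathbf{s}}\circ\iota_x$ will be compared with $\phi_{\mathbf{s}}$. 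Since $\phi_{\mathbf{s}}=\id$ near $x$, the pullback $(\phi_{\mathbf{s}}^{-1})^*$ is the identity on a Weinstein neighbourhood of $T^*_xS^n$; consequently $C_{\mathbf{s}}|_{T^*_xS^n}=\tau^{-1}\phi_{\mathbf{s}}^*\tau|_{T^*_xS^n}$. Tracing through shows that $C_{\mathbf{s}}$ fixes every $p$ with $\bar\sigma(p)\notin U$ and sends the remaining points of $T^*_xS^n$ off the fibre but only into a bounded region of $T^*S^n$. In particular $C_{\mathbf{s}}(T^*_xS^n)$ is an exact Lagrangian which agrees with $T^*_xS^n$ outside a compact set, and $\{C_{\mathbf{s}}(T^*_xS^n)\}_{\mathbf{s}\in I^k}$ is a smooth family of such deformations, constant over $\partial I^k$.

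The key technical step is the parametric Weinstein Lagrangian neighbourhood theorem, which produces a smooth family $H_{\mathbf{s}}$ of compactly-supported Hamiltonian diffeomorphisms of $T^*S^n$ with $H_{\mathbf{s}}(T^*_xS^n)=C_{\mathbf{s}}(T^*_xS^n)$ and $H_{\mathbf{s}}=\id$ over $\partial I^k$; here exactness of the Lagrangians and their matching asymptotic behaviour ensure that the Lagrangian family is realisable by an ambient Hamiltonian family. Setting $C'_{\mathbf{s}}:=H_{\mathbf{s}}^{-1}C_{\mathbf{s}}$ produces a family in $\OP{Symp}^c(T^*S^n)$ preserving $T^*_xS^n$, homotopic to $C_{\mathbf{s}}$ via the path $t\mapsto (H_{\mathbf{s}}^t)^{-1}C_{\mathbf{s}}$ along the generating Hamiltonian flow. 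The restriction $\iota_x^{-1}\circ C'_{\mathbf{s}}\circ\iota_x$ is identity off a compact subset of $\RR^n$, hence extends by fixing $\infty$ to a compactly-supported diffeomorphism of $S^n$.

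To identify this extension with $\phi_{\mathbf{s}}$ up to isotopy, observe that $\tau(T^*_xS^n)$ meets each fibre $T^*_yS^n$ ($y\neq x$) transversely in a single point $\tau(x,\bar\sigma^{-1}(y))$, and that the net effect of sandwiching with $\tau^{-1}$ and $\tau$ is to move this intersection from the fibre over $y$ to the fibre over $\phi_{\mathbf{s}}(y)$, with any residual fibrewise motion absorbed into the straightening $H_{\mathbf{s}}$. Under $\bar\sigma$, this is exactly the action of $\phi_{\mathbf{s}}$ on $S^n$. The main obstacle is the parametric Weinstein step, which requires verifying that the smooth family of compactly-supported exact Lagrangian deformations $C_{\mathbf{s}}(T^*_xS^n)$ is realised by an ambient compactly-supported Hamiltonian family satisfying the boundary condition $H_{\mathbf{s}}=\id$ on $\partial I^k$; once this is established, the identification in (iii) follows by tracing through the conjugation.
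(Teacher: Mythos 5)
Your step (ii) is where the argument breaks. The ``parametric Weinstein Lagrangian neighbourhood theorem'' you invoke does not do what you need: Weinstein's theorem only identifies a neighbourhood of a Lagrangian with a neighbourhood of the zero-section, and the statement that exactness plus agreement with $T^*_xS^n$ at infinity guarantees a compactly supported Hamiltonian diffeomorphism $H_{\mathbf{s}}$ with $H_{\mathbf{s}}(T^*_xS^n)=C_{\mathbf{s}}(T^*_xS^n)$ (let alone a family of them, equal to $\id$ over $\partial I^k$) is precisely a local Lagrangian unknottedness assertion, which is not a known general principle and is very close to the phenomena this paper is designed to probe. For this particular family such an ambient isotopy does exist, but only because one can write it down explicitly; your appeal to a general realisation principle for exact Lagrangians standard at infinity is a genuine gap. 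Moreover, even granting the existence of $H_{\mathbf{s}}$, your step (iii) does not follow: the induced map $\iota_x^{-1}\circ H_{\mathbf{s}}^{-1}C_{\mathbf{s}}\circ\iota_x$ depends on the choice of $H_{\mathbf{s}}$. Two admissible straightenings differ by a compactly supported symplectomorphism preserving $T^*_xS^n$ setwise, and the restriction of such a symplectomorphism to the fibre need not be isotopic to the identity --- indeed, the existence of symplectomorphisms acting nontrivially on the parametrisation of a Lagrangian they preserve is exactly what the main theorem establishes. So ``any residual fibrewise motion absorbed into the straightening $H_{\mathbf{s}}$'' cannot be waved away; without an explicit straightening you have no control over the isotopy class of the induced reparametrisation.

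The paper's proof sidesteps both problems by building the straightening into the commutator itself. One chooses a single, $\mathbf{s}$-independent compactly supported Hamiltonian path $\psi_t$ with $\psi_0=\id$ and $\psi_1(\tau(L_1))\cap U^*=U$, where $L_1=\iota_x(\RR^n)$ and $U^*$ is the union of fibres over $U$; that is, after the isotopy the twisted fibre meets the support region of $\phi_{\mathbf{s}}^*$ only in the piece $U$ of the zero-section. Setting $C^t_{\mathbf{s}}=\tau^{-1}\psi_t^{-1}\phi_{\mathbf{s}}^*\psi_t\tau(\phi_{\mathbf{s}}^{-1})^*$ gives an explicit homotopy in $\OP{Symp}^c(T^*S^n)$, automatically trivial over $\partial I^k$, from $C_{\mathbf{s}}$ to $C'_{\mathbf{s}}:=C^1_{\mathbf{s}}$. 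One then traces through the four factors: $(\phi_{\mathbf{s}}^{-1})^*$ fixes $L_1$ pointwise, $\psi_1\tau$ carries $L_1$ to a Lagrangian meeting $U^*$ only in $U$, $\phi_{\mathbf{s}}^*$ preserves that Lagrangian setwise (acting on it only through its action on $U$ in the zero-section), and $\tau^{-1}\psi_1^{-1}$ carries it back to $L_1$. Hence $C'_{\mathbf{s}}$ preserves $L_1$, and the parametrisation is changed only at the third stage, by $\phi_{\mathbf{s}}$, which gives the second bullet point of the lemma. If you want to keep your structure, you must replace the asserted parametric Weinstein step by such an explicit construction in which the effect on the fibre parametrisation is visible.
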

\begin{proof}
Let $L_1$ denote $\iota_x(\RR^n)$ and let $U^*\subset T^*S^n$ denote the union of cotangent fibres over $U\subset S^n$, so $U^*$ contains the support of $\phi_{\mathbf{s}}$ for all $\mathbf{s}$. Note that there is a compactly-supported Hamiltonian path $\psi_t$ with the properties that
\begin{itemize}
\item $\psi_0=\OP{id}$, and
\item the intersection between $\psi_1\circ\tau(L_1)$ and $U^*$ equals $U$. See Figure \ref{fig:support}.
\end{itemize}

\begin{figure}[htb]
\begin{center}
\labellist
\pinlabel $W\subset\:T^*S^n$ at 180 240
\pinlabel $\color{blue}{L_1\cap W=}$ at 107 80
\pinlabel $\color{blue}{T^*_xS^n\cap W}$ at 108 60
\pinlabel $\color{red}{\tau(L_1)}$ at 95 185
\pinlabel $\color{red}{\psi_1\tau(L_1)}$ at 310 185
\pinlabel $S^n$ at 177 120
\pinlabel $U$ at 238 74
\pinlabel $\color{darkg}{U^*}$ at 42 220
\pinlabel $\color{darkg}{U^*}$ at 250 220
\endlabellist
\includegraphics[height=150px]{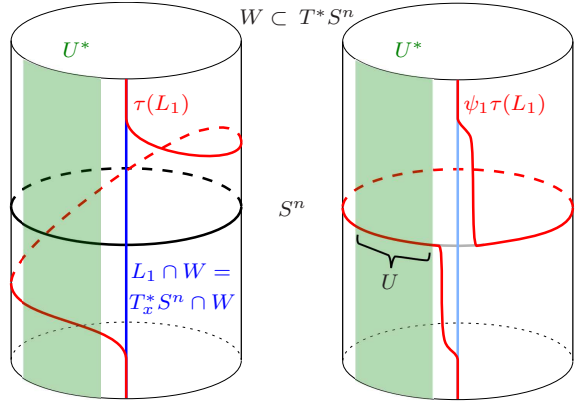}
\caption{The cotangent bundle $T^*S^n$, where $L_1$ is the cotangent fibre $T_xS^n$ for a point $x$ in the complement of $U^*$ and $S^n$ is the zero-section. After an isotopy $\psi_t$ we can assume that $\psi_1(\tau(L_1))\cap U^*=U$, where $U$ is the subset of the zero-section where all the diffeomorphisms $\phi_{\mathbf{s}}$ are supported. It follows that $C'_{\mathbf{s}}(L_1)=L_1$ setwise.}
\label{fig:support}
\end{center}
\end{figure}

We will set $C^t_{\mathbf{s}}=\tau^{-1}\psi_t^{-1}\phi_{\mathbf{s}}^*\psi_t\tau(\phi_{\mathbf{s}}^{-1})^*$. This gives a homotopy between the families $C_{\mathbf{s}}=C^0_{\mathbf{s}}$ and $C'_{\mathbf{s}}:=C^1_{\mathbf{s}}$. We need to show that $C'_{\mathbf{s}}$, defined this way, has the desired properties.

First, the domain of $(\phi^{-1})^*$ is disjoint from $L_1$, so $(\phi^{-1})^*$ fixes $L_1$ pointwise.

Second, we apply $\psi_1\circ\tau$; we have $\psi_1(\tau(L_1))\cap U^*=U$ and $U$ is contained in the zero-section.

Third, $\phi^*$ fixes $\psi_1(\tau(L_1))$ setwise because $\psi_1(\tau(L_1))\cap U^*=U$, which is obviously is fixed setwise by $\phi^*$.

Fourth, $\tau^{-1}(\psi_1)^{-1}$ sends
\[\phi^*(\psi_1\tau(L_1))=\psi_1(\tau(L_1))\]
back to $L_1$. Thus we see that each symplectomorphism $C^1_{\mathbf{s}}$ fixes $L_1$ setwise.

Post-composing $C'_{\mathbf{s}}\circ\iota_x$ with $\iota_x^{-1}$ gives a family of diffeomorphisms of $\RR^n$. At the third stage, the parametrisation changed by $\phi_{\mathbf{s}}$, so $\iota_x^{-1}\circ C'_{\mathbf{s}}\circ\iota_x$ is homotopic (as a family of diffeomorphisms of $S^n=\RR^n\cup\{\infty\}$) to $\phi_{\mathbf{s}}$.
\end{proof}

\subsection{Proof of Theorem \ref{thm:maintheorem}}

Let $\ell\colon S^n\to A^n_m$ be a Lagrangian embedding of one of the matching spheres in the $A_m$-Milnor fibre. Without loss of generality, we assume $m\geq 2$: if the theorem holds for $m\geq 2$ then in particular $\tau_{\ell}^{-1}\circ\tau_{\ell\circ\phi_{\mathbf{s}}}$ cannot be nullhomotopic inside $\OP{Symp}^c(A^n_1)$ or we could perform this nullhomotopy in a Weinstein neighbourhood of any sphere in $A^n_m$, $m\geq 2$. Recall that the matching sphere living over the path $[k-1,k]\subset\CC$ is called $L_k$; we will assume that $\ell(S^n)=L_2$ and we do not lose any generality since the matching spheres are all conjugate under the group of compactly-supported symplectomorphisms.

Suppose that $\phi_{\mathbf{s}}$, $\mathbf{s}=(s_1,\ldots,s_k)\in I^k$, is a $k$-parameter family of diffeomorphisms of $S^n$ such that $\phi_{\mathbf{s}}=\id$ for $\mathbf{s}$ in a neighbourhood of $\partial (I^k)$ and for which there exists a ball $U\subset S^n$ for which $\phi_{\mathbf{s}}|_U=\OP{id}$ for all $\mathbf{s}$. This gives rise to a $k$-parameter family of symplectomorphisms
\[C_{\mathbf{s}}\co=\tau_{\ell}^{-1}\tau_{\ell\circ\phi_{(\mathbf{s},1)}}\co A^n_m\to A^n_m\]

Without loss of generality, the ball $\ell(U)$ can be chosen to contain the intersection point $L_1\cap L_2$, where $L_1$ is the matching sphere living over the path $[0,1]\subset\CC$. The embedding $\ell$ extends to an embedding $\varpi$ of a Weinstein neighbourhood $\mho\subset T^*S^n$ into $A^n_m$; we can assume that there is a point $x\not\in U$ such that $\varpi(T_x^*S^n\cap \mho)\subset L_1$. Rescaling so that the model Dehn twist $\tau$ has support inside $\mho$, we can apply Lemma \ref{lma:param} to $\mho$ and deduce that the family $C_{\mathbf{s}}$ is homotopic to a family $C'_{\mathbf{s}}$ which preserves $L_1$ setwise and such that $C'_{\mathbf{s}}|_{L_1}$ is homotopic inside $\OP{Diff}(L_1)$ to $\phi_{\mathbf{s}}$.

We will assume that $\mathbf{s}\mapsto C'_{\mathbf{s}}$ is nullhomotopic in $\OP{Symp}^c(T^*A^n_m)$, and derive a contradiction to the assumption that $S^{n+k+1}_{\alpha}\not\in\mathcal{L}_{n+k+1}$. Suppose $C'_{(\mathbf{s},t)}$ is a nullhomotopy of $C'_{\mathbf{s}}$ satisfying $C'_{(\mathbf{s},t)}=\id$ for all $(\mathbf{s},t)\in\partial(I^{k+1})\setminus(I^k\times\{1\})$, and $C'_{(\mathbf{s},1)}=C'_{\mathbf{s}}$.

By Proposition \ref{prp:openemb}, we have an open symplectic embedding $e\co W\to T^*S^{n+k+1}$ where $W$ is a Weinstein neighbourhood of $\bigcup_{i=1}^nL_i\times I^{k+1}\subset A^n_m\times T^*I^{k+1}$. Taking the Lagrangian suspension of $L_1\subset A^n_m$ along $C'_{(\mathbf{s},t)}$ and pushing inwards along a suitable Liouville flow on $A^n_m\times T^*I^{k+1}$ preserving the product structure, we obtain a Lagrangian embedding $L_1\times I^{k+1}\to W$ which agrees with the standard embedding near the boundary.

\begin{figure}[htb]
\begin{center}
\includegraphics[width=200px]{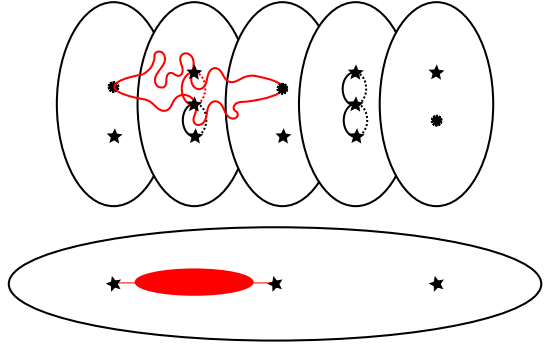}
\caption{$k=0$ case: Lagrangian suspension of a Hamiltonian isotopy of $C'$ with the identity would allow us to construct an exotic Lagrangian sphere in $T^*S^{n+1}$ (compare with Figure \ref{fig:leffib}).}
\label{fig:exotic}
\end{center}
\end{figure}

Replacing $e(L_1\times I^{k+1})$ with this Lagrangian suspension gives us a Lagrangian submanifold of $T^*S^{n+k+1}$ diffeomorphic to $S^{n+k+1}_{\alpha}$ (see Figure \ref{fig:exotic} for an illustration in the case $k=0$). This Lagrangian submanifold intersects a cotangent fibre once transversely, by the second property of the embedding $e$ from Proposition \ref{prp:openemb}, so $S^{n+k+1}_{\alpha}\in\mathcal{L}_{n+k+1}$. Since we assumed that $S^{n+k+1}_{\alpha}\not\in\mathcal{L}_{n+k+1}$ we have a contradiction, hence $\mathbf{s}\mapsto C'_{\mathbf{s}}$ is not nullhomotopic in $\OP{Symp}^c(T^*A^n_m)$.

\section{Proof of Theorem \ref{thm:anothersymp}}
\label{sec:symp}
Let $\phi_t$ be a loop of diffeomorphisms of $S^n$ with $\phi_0=\phi_1=\OP{id}$. We now describe how the loop of compactly-supported symplectomorphisms $\tau^{-1} \tau_{\phi_t}$ of $T^*S^n$ can induce a symplectomorphism $\Psi \in \mathrm{Symp}^c(T^*(S^n \times S^1))$ which is not isotopic to the identity through compactly supported symplectomorphisms.

By Lemma \ref{lma:modifyhtpyclass} we can assume without loss of generality that there is a ball $V\subset S^n$ such that each diffeomorphism $\phi_t$ fixes $V$ pointwise. Let $x$ be a point in $V$.

By Lemma \ref{lma:param}, we can homotope the loop $\tau^{-1}\tau_{\phi_t}$ of compactly supported symplectomorphisms to a loop $C'_t$ of compactly supported symplectomorphisms which preserve the cotangent fibre $T^*_xS^n$ setwise. We can write $C'_t=\phi_H^t$, where $H_t \colon T^*S^n \to \RR$ is a time-dependent Hamiltonian with compact support. The suspension
\begin{gather*}
\Phi_H \colon T^*S^n \times T^*(0,1) \to T^*S^n \times T^*(0,1),\\
(x,q,p) \mapsto (\phi^q_H(x),q,p-H_q(\phi^q_H(x))),
\end{gather*}
is not compactly-supported, but descends along the projection
\begin{gather*}
T^*S^n \times T^*(0,1) \to T^*S^n \times (0,1) \times \RR/\ZZ, \\
(x,q,p) \mapsto (x,q,[p]),
\end{gather*}
to a compactly supported symplectomorphism $\Psi \in \mathrm{Symp}^c(T^*S^n \times (0,1) \times \RR/\ZZ)$, where we have endowed $(0,1) \times \RR/\ZZ \subset T^*S^1$ with the standard symplectic form.
\addtocounter{mainthm}{-2}
\begin{mainthm}
If the loop $\phi_t$ represents a class $\alpha\in\pi_1(\OP{Diff}(S^n))$ for which  $S^{n+2}_{\alpha}\not\in\mathcal{L}_{n+2}$ then $\Psi \in \mathrm{Symp}^c(T^*(S^n \times S^1))$ is not isotopic to the identity through compactly supported symplectomorphisms.
\end{mainthm}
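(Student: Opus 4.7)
The plan is to argue by contradiction: assume there is a compactly-supported symplectic isotopy $\Psi_s$ ($s\in I$) from $\Psi_0=\id$ to $\Psi_1=\Psi$, and use it to show that $S^{n+2}_{\alpha}\in\mathcal{L}_{n+2}$, contradicting the hypothesis. The strategy mirrors Theorem~\ref{thm:maintheorem} one dimension higher, and exploits the fact that $\Psi$, by construction, acts on a cotangent fibre of $T^*(S^n\times S^1)$ as the clutching diffeomorphism realising $\alpha$ under the Gromoll homomorphism.

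The key computation is as follows. Normalising $H_t$ to vanish on $T^*_xS^n\subset T^*S^n$ (Remark~\ref{rmk:const}, valid because $C'_t$ already preserves $T^*_xS^n$), for any $\theta_0\in S^1$ the cotangent fibre $F:=T^*_{(x,\theta_0)}(S^n\times S^1)\cong\RR^{n+1}$ is setwise preserved by $\Psi$; in the identification $F=T^*_xS^n\times\RR$ with coordinates $(y,p_\theta)$, the suspension formula defining $\Psi$ yields
\[\Psi|_F\co(y,p_\theta)\mapsto(\phi^{p_\theta}_H(y),p_\theta).\]
Under one-point compactification $F\cup\{\infty\}=S^{n+1}$, this is precisely the clutching diffeomorphism realising the connecting homomorphism $\pi_1(\OP{Diff}(S^n))\to\pi_0(\OP{Diff}(D^{n+1},\partial D^{n+1}))\cong\Theta_{n+2}$ appearing in the Gromoll filtration discussed just before Theorem~\ref{thm:maintheorem}, so the class $[\Psi|_F]$ corresponds to $[S^{n+2}_\alpha]$.

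Next, I would prove a one-dimension-up analogue of Lemma~\ref{lma:param} -- by the same Hamiltonian push-off argument, now applied in a Weinstein neighbourhood of $F\subset T^*(S^n\times S^1)$ -- to homotope $\Psi_s$ rel endpoints so that each $\Psi_s$ preserves $F$ setwise. Since $\Psi_s$ is compactly supported in $T^*(S^n\times S^1)$, its restriction $\Psi_s|_F$ is then a compactly-supported smooth isotopy in $\OP{Diff}^c(F)$ from $\id$ to $\Psi|_F$, whence $[\Psi|_F]=0\in\pi_0(\OP{Diff}^c(\RR^{n+1}))\cong\Theta_{n+2}$. Combined with the identification of the previous paragraph, $[S^{n+2}_\alpha]=0$, so $S^{n+2}_\alpha$ is diffeomorphic to the standard $S^{n+2}$ and in particular lies in $\mathcal{L}_{n+2}$ (as the zero section of $T^*S^{n+2}$), contradicting the hypothesis.

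The principal obstacle will be this fibre-preserving reduction of the isotopy: Lemma~\ref{lma:param} in the paper is formulated in $T^*S^n$ for loops of twist-type, and one must verify that the same push-off argument goes through when $F$ sits as a non-compact Lagrangian inside $T^*(S^n\times S^1)$ and the family $\Psi_s$ is an arbitrary compactly-supported symplectic isotopy. The required input is that the Hamiltonian isotopy $G_s$ bringing $\Psi_s(F)$ back to $F$ can be chosen compactly supported in a Weinstein neighbourhood of $F$, depends smoothly on $s$, and satisfies $G_0=G_1=\id$ (which is possible since $\Psi_0(F)=\Psi_1(F)=F$); pre-composing $\Psi_s$ with $G_s$ then preserves both compact support and the endpoint conditions while enforcing fibre-invariance at every intermediate time.
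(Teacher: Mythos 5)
There is a genuine gap, and it sits exactly where you flag your ``principal obstacle'': the claim that the hypothesised isotopy $\Psi_s$ can be deformed, rel endpoints, to one in which every $\Psi_s$ preserves the cotangent fibre $F$ setwise. This is not a routine extension of Lemma~\ref{lma:param}. That lemma works only because of the explicit commutator structure $\tau^{-1}\phi_{\mathbf{s}}^*\tau(\phi_{\mathbf{s}}^{-1})^*$: after conjugating $\tau$ by a single fixed Hamiltonian push-off $\psi_1$, each of the four factors individually preserves $L_1$. For an \emph{arbitrary} compactly supported symplectic isotopy $\Psi_s$ there is no such structure to exploit. Concretely, finding your family $G_s$ with $G_s(\Psi_s(F))=F$ and $G_0=G_1=\id$ amounts to lifting the loop $s\mapsto\Psi_s(F)$ in the orbit of $F$ to a \emph{loop} in $\OP{Ham}^c$ based at the identity; by the long exact sequence of the evaluation fibration $\OP{Stab}(F)\to\OP{Ham}^c\to\OP{Orb}(F)$, the obstruction to doing so is precisely the class of $\Psi$ in $\pi_0$ of the setwise stabiliser of $F$ --- i.e.\ whether $\Psi$ is isotopic to the identity through $F$-preserving symplectomorphisms. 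That is essentially the statement you are trying to establish cannot hold, so the reduction is circular. A strong external warning sign: if your reduction were available, you would conclude $[S^{n+2}_\alpha]=0$ in $\Theta_{n+2}$ and hence prove the theorem under the strictly weaker hypothesis ``$S^{n+2}_\alpha$ is exotic'', with no input from Abouzaid, Abouzaid--Kragh or Ekholm--Smith; the hypothesis $S^{n+2}_\alpha\notin\mathcal{L}_{n+2}$ (with $\mathcal{L}_{n+2}\subset bP_{n+3}$ possibly containing exotic spheres) would then be pointless, which it is not.

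The paper avoids this entirely by never restricting the isotopy to the fibre. Instead it lifts the hypothesised isotopy of $\Psi$ to the universal cover to get an isotopy of $\Phi_H$ with the identity rel boundary, takes the \emph{Lagrangian suspension} of $T^*_xS^n\times I$ under that isotopy --- producing a Lagrangian $\RR^n\times I^2\subset T^*S^n\times(T^*I)^2$ standard near the boundary whose ``twisting'' records $\alpha$ --- and then implants it via the $A^n_2$-Milnor fibre and Proposition~\ref{prp:openemb} into $T^*S^{n+2}$, obtaining a Lagrangian copy of $S^{n+2}_\alpha$ meeting a cotangent fibre once transversely. This contradicts $S^{n+2}_\alpha\notin\mathcal{L}_{n+2}$ directly, with the hard smooth-structure input carried by Proposition~\ref{prp:localknot} rather than by any fibre-preserving deformation of the isotopy. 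Your opening computation of $\Psi|_F$ is fine and is indeed the reason $F$ is the right object to track, but the mechanism for extracting a contradiction must go through the suspension construction, not through a restriction of the isotopy to $F$.
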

\begin{proof}
The proof is similar to the proof of Theorem \ref{thm:maintheorem} ($k=1$ case). If $\Psi$ were isotopic to the identity in some compact region (say contained inside $D^*S^n\times (0,1)\times S^1$) then we could lift it to the universal cover $D^*S^n\times I\times\RR=D^*S^n \times T^*I$ and get an isotopy of $\Phi_H$ with the identity fixing a neighbourhood of the boundary
\[\partial(D^*S^n) \times T^*I \: \cup \: D^*S^n \times T^*_{\partial I}I.\]
Because $T^*_xS^n$ is fixed setwise by each $C'_t$, the suspension of $T^*_xS^n \times I \subset T^*S^n \times T^*I$ under this isotopy is a Lagrangian embedding $e\co\RR^n\times I^2\hookrightarrow T^*S^n \times (T^*I)^2$ which may be assumed to agree with $T_x^*S^n\times I^2$ outside of $D_x^*S^n \times I^2$, as well as in a neighbourhood of $T^*S^n\times\partial(I^2)$ (see Remark \ref{rmk:const}).

There is an embedding $\iota\co D^*S^n\to A^n_2$ of a disc subbundle into $A^n_2$ satisfying $\iota(S^n)=L_2$ and $\iota(D^*_xS^n)=V\subset L_1$. Note that
\[L'=(\iota\times\OP{id})(e(\RR^n\times I^2) \cap (D^*S^n\times T^*(I^2)))\]
agrees with $V\times I^2$ near the boundary.

Using Proposition \ref{prp:openemb} we can embed an open neighbourhood of $(L_1\cup L_2)\times I^2\subset A^n_2\times T^*(I^2)$ into $T^*S^{n+2}$ so that $L_1\times I^2$ is mapped into a subset of $S^{n+2}$. Excising $V\times I^2\subset S^{n+2}$ and replacing it with $L'$ gives a Lagrangian $S^{n+2}_{\alpha}\subset T^*S^{n+2}$ which intersects a cotangent fibre once transversely (for example a cotangent fibre based at a point outside the image of $S^n\times I^2\subset S^{n+2}$). Thus $S^{n+2}_{\alpha}\in \mathcal{L}_{n+2}$.
\end{proof}

\section{Proof of Theorem \ref{thm:exoticform}}
\label{sec:exoticstructure}

By Proposition \ref{prp:smoothisotopy}, $\tau^{-1} \tau_\phi \in \OP{Symp}^c(T^* S^n)$ is isotopic to the identity by a smooth compactly supported isotopy in the case when $n=4\ell+3$. In fact, as Lemma \ref{lma:formsymp} below shows, there is a smooth isotopy
\begin{gather*}
\Phi_t \co T^*S^n \to T^*S^n,\\
\Phi_t = \begin{cases}
\OP{id}_{T^*S^n}, & t \le 0,\\
\phi^*, & t \ge 1,
\end{cases}
\end{gather*}
and hence, the former isotopy can be taken to be $\tau^{-1} \Phi_t  \tau  \Phi_t^{-1}$. By Lemma \ref{lma:param}, we can find a further isotopy from $\tau^{-1}\tau_{\phi}$ to a compactly supported symplectomorphism $C$ which fixes a cotangent fibre $T^*_xS^n$ setwise. Let us write $M_t$ for the concatentation of these two isotopies; the key properties of this isotopy are that
\begin{itemize}
\item each $M_t$ is a compactly supported diffeomorphism of $T^*S^n$,
\item $M_t=\OP{id}$ for $t \le 0$ while $M_t=C$ for $t \ge 1$,
\item $M_t$ has the form $\tau^{-1}\Phi_{2t}\tau\Phi_{2t}^{-1}$ for $t\leq 1/2$ while each $M_t$ is a symplectomorphism for $t \ge 1/2$.
\end{itemize}
We use this isotopy to construct the diffeomorphism
\begin{gather*} \Psi_\phi \co T^*S^n \times T^*S^1 \to T^*S^n \times T^*S^1,\\
(x,q,p) \mapsto (M_p(x),q,p).
\end{gather*}
Observe that the two-form $\Psi_\phi^*(d\theta_{S^n \times S^1})-d\theta_{S^n \times S^1}$ is compactly supported, even though $\Psi_\phi$ is not ($\theta_{S^n\times S^1}$ denotes the canonical 1-form on $T^*(S^n\times S^1)$).
\begin{thm}
Suppose that $n=4\ell+3$, $\ell \ge 1$, and let $\Psi_\phi$ be as above. If $S_{[\phi]}^{n+1} \notin \mathcal{L}_{n+1}$, then there exists no compactly supported symplectomorphism
\[ \Psi \co (T^*(S^n \times S^1),d\theta_{S^n \times S^1}) \to (T^*(S^n \times S^1),\Psi_\phi^*(d\theta_{S^n \times S^1})).\]
Furthermore, $\Psi_\phi^*(d\theta_{S^n \times S^1})$ is homotopic to $d\theta_{S^n \times S^1}$ by a compactly supported homotopy through non-degenerate two-forms.
\end{thm}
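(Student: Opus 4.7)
For the second statement, I construct the homotopy via pullbacks by diffeomorphisms. Define $\Psi_\phi^s(x,q,p) := (M_{h_s(p)}(x), q, p)$ with reparametrization $h_s(p) := sp - (1-s)$. Then $\Psi_\phi^0 = \id$ (since $h_0 \equiv -1 < 0$ and $M_{-1} = \id$) and $\Psi_\phi^1 = \Psi_\phi$. For each $s \in (0,1]$, the diffeomorphism $\Psi_\phi^s$ equals $\id$ on $\{p \le (1-s)/s\}$ and $C \times \id$ on $\{p \ge (2-s)/s\}$; both are symplectomorphisms, so $\omega_s := (\Psi_\phi^s)^* d\theta_{S^n \times S^1}$ differs from $d\theta_{S^n \times S^1}$ only on the compact region $\OP{supp}(M) \times S^1 \times [(1-s)/s,(2-s)/s]$. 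This yields a smooth, compactly supported homotopy of non-degenerate two-forms (in fact, symplectic forms) from $d\theta_{S^n \times S^1}$ to $\Psi_\phi^* d\theta_{S^n \times S^1}$. Note that the Moser trick does not contradict the first statement: the support of the Moser vector field drifts to $p = +\infty$ as $s \to 0^+$, so its flow is not compactly supported.

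For the first statement, suppose for contradiction that such a $\Psi$ exists. Then $F := \Psi_\phi \circ \Psi$ is a symplectomorphism of $(T^*(S^n \times S^1), d\theta_{S^n \times S^1})$ equal to $\Psi_\phi$ outside the compact support of $\Psi$; hence $F = \id$ for $p$ sufficiently negative and $F = C \times \id$ for $p$ sufficiently positive, where $C = \tau^{-1} \tau_\phi$. Lifting to the universal cover $T^*(S^n \times \RR) = T^*S^n \times T^*\RR$ yields $\tilde F$ with the same asymptotic behavior in $p$. By Lemma \ref{lma:param} applied in the construction of $C$, we may arrange that $C$ preserves the cotangent fibre $T^*_{x_0}S^n$ setwise; hence the image $\tilde F(\Lambda)$ of the Lagrangian $\Lambda := T^*_{x_0}S^n \times \{0\}_{\tilde q} \times \RR_p$ agrees with $\Lambda$ outside a compact region. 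Following the template of the proof of Theorem \ref{thm:anothersymp}, combine Proposition \ref{prp:openemb} (with $k = 0$ and $m = 2$) with the disc-bundle inclusion $\iota \co D^*S^n \hookrightarrow A^n_2$ sending $D^*_{x_0}S^n$ onto $V \subset L_1$ to embed a neighborhood of $V \times I \subset A^n_2 \times T^*I$ into $T^*S^{n+1}$, with $V \times I$ mapped into the zero section and the image avoiding a cotangent fibre $\Lambda'$. Excising $V \times I$ from the zero section of $T^*S^{n+1}$ and inserting the pushforward of $\tilde F(\Lambda)$'s compact deformation produces a Lagrangian $\Sigma \subset T^*S^{n+1}$; the clutching data encoded in $\tilde F$'s interpolation from $\id$ to $C \times \id$ (whose restriction to $T^*_{x_0}S^n$ is isotopic to $\phi$) shows $\Sigma$ is diffeomorphic to $S^{n+1}_{[\phi]}$, and by construction $\Sigma$ meets $\Lambda'$ once transversely. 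This contradicts $S^{n+1}_{[\phi]} \notin \mathcal{L}_{n+1}$.

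The main obstacle will be the diffeomorphism-type identification at the end: verifying that the Lagrangian $\Sigma$ is indeed $S^{n+1}_{[\phi]}$. Although $\tilde F$ is a symplectomorphism with the correct asymptotic behavior in $p$, it is not a priori in an explicit Lagrangian suspension form; one must show that the $p$-directional interpolation from $\id$ to $C \times \id$ realizes the clutching by $\phi$ in the smooth structure on $\Sigma$. Morally, the existence of $\Psi$ upgrades the smooth (but non-symplectic) isotopy $M_t$ underlying $\Psi_\phi$ into a genuine compactly supported symplectic path from $\id$ to $C$, which is exactly what the $k = 0$ case of Theorem \ref{thm:maintheorem} rules out.
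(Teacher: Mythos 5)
Your proof of the first statement is essentially the paper's own argument: compose with $\Psi_\phi$, note that the lift of $\Psi_\phi\circ\Psi$ to the universal cover is the identity for $p\ll 0$ and $(C,\OP{id})$ for $p\gg 0$, push the fibre $T^*_xS^n\times T^*_0\RR$ (preserved setwise at both ends) through this map, and implant the resulting compactly deformed Lagrangian strip into $T^*S^{n+1}$ via the inclusion $D^*S^n\hookrightarrow A^n_2$ and Proposition \ref{prp:openemb}, contradicting $S^{n+1}_{[\phi]}\notin\mathcal{L}_{n+1}$. The identification you flag as the main obstacle is no worse here than in Theorems \ref{thm:maintheorem} and \ref{thm:anothersymp}: outside a compact set the new embedding coincides with the standard one at the $p\le -N$ end and is reparametrised by $C|_{T^*_xS^n}$, which is isotopic to $\phi$ by Lemma \ref{lma:param}, at the $p\ge N$ end; that is exactly the clutching description of $S^{n+1}_{[\phi]}$.

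The second statement is where your proposal has a genuine gap. The family $\omega_s=(\Psi_\phi^s)^*(d\theta_{S^n\times S^1})$ with $h_s(p)=sp-(1-s)$ is a smooth homotopy through symplectic forms, but it is \emph{not} compactly supported: for $s\in(0,1]$ the locus where $\omega_s\neq d\theta_{S^n\times S^1}$ lies over $p\in[(1-s)/s,(2-s)/s]$, and these intervals escape to $+\infty$ as $s\to 0^+$, so there is no fixed compact set outside of which the whole family is standard. The theorem asserts the uniform (``relative infinity'') statement, and that is what matters both for Theorem \ref{thm:exoticform} and for the remark following the theorem; what you prove is only that each $\omega_s$ is standard outside an $s$-dependent compact set, which is comparatively trivial. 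Nor can your scheme be repaired: every $\omega_s$ is closed, and a genuinely compactly supported homotopy through \emph{symplectic} forms would, by Moser's trick (compactly supported primitives exist since $H^2_c(T^*(S^n\times S^1))=0$ in this range), produce a compactly supported symplectomorphism from $d\theta_{S^n\times S^1}$ to $\Psi_\phi^*(d\theta_{S^n\times S^1})$, contradicting the first statement you just proved --- your own observation that the Moser vector field ``drifts to $p=+\infty$'' is precisely the symptom of the missing compact support. The paper's homotopy necessarily leaves the class of closed forms: it discards the $\beta(p)\wedge dp$ term, reduces the problem to contracting the loop $(\tau^{-1}\Phi_t\tau\Phi_t^{-1})^*(d\theta_{S^n})$ rel endpoints through non-degenerate (non-closed) two-forms, and carries this out using Lemma \ref{lma:formsymp}, i.e.\ the formal Lagrangian isotopy available because $n=4\ell+3$ --- a hypothesis your argument for this half never uses, which is itself a warning sign.
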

\begin{rmk}
In other words, the above theorem shows that the one-parameter version of the $h$-principle for symplectic forms fails. Namely, if the path of non-degenerate two-forms joining $\Psi_\phi^*(d\theta_{S^n \times S^1})$ and $d\theta_{S^n \times S^1}$ could be deformed to a path of symplectic forms relative infinity, an application of Moser's trick would provide a compactly supported symplectomorphism between the two symplectic structures.
\end{rmk}
\begin{proof}
By the construction of $\Psi_\phi$, we may assume that $\Psi_\phi \circ \Psi$ is supported inside $D^*S^n \times T^*S^1$. The symplectomorphism $\Psi_\phi \circ \Psi$ lifts to a symplectomorphism of the universal cover $(T^*(S^n \times \RR),d\theta_{S^n\times \RR})$ supported inside $D^*S^n \times T^*\RR$. Furthermore, there is some $N>0$ such that this symplectomorphism is of the form
\begin{gather*}
\Psi_\phi \circ \Psi|_{\{ p \le -N\}} = (\id_{T^*S^n},\id_{T^*\RR})|_{\{ p \le -N\}},\\
\Psi_\phi \circ \Psi|_{\{ p \ge N\}} =(C,\id_{T^*\RR})|_{\{ p \ge N\}}.
\end{gather*}
For convenience, let $I$ denote the interval $[-N,N]$. The proof is similar to the proof of Theorems \ref{thm:anothersymp} and \ref{thm:maintheorem} ($k=0$ case). Since $C$ preserves $T^*_xS^n$ setwise, the composition of $\Psi_{\phi}\circ\Psi$ with the standard embedding $T^*_xS^n\times T^*_0\RR$ is a new Lagrangian embedding $e\co\RR^n\times \RR\to T^*(S^n\times\RR)$ which agrees with $T^*_xS^n\times T^*_0\RR$ outside a compact set, say $D^*_xS^n \times D^*_0\RR$.

There is a symplectic embedding $\iota\co D^*S^n\to A^n_2$ of a disc subbundle into $A^n_2$ such that $\iota(S^n)=L_2$ and $\iota(D^*_xS^n)=V\subset L_1$. Note that
\[L'=(\iota\times\OP{id})(e(\RR^n\times I)\cap (D^*S^n\times D^*\RR))\]
agrees with $V\times I$ near the boundary.

Then use Proposition \ref{prp:openemb} to embed an open neighbourhood of $(L_1\cup L_2)\times I\subset A^n_2\times T^*I$ into $T^*S^{n+1}$ so that $L_1\times I$ is mapped into a subset of $S^{n+1}$. Excising $V\times I\subset S^{n+1}$ and replacing it with $L'$ gives a Lagrangian $S^{n+1}_{\alpha}\subset T^*S^{n+1}$ which intersects a cotangent fibre once transversely (for example a cotangent fibre based at a point outside the image of $S^n\times I\subset S^{n+1}$). Thus $S^{n+1}_{\alpha}\in \mathcal{L}_{n+1}$.

To establish the existence of the sought homotopy, we argue as follows. First, note that
\[\omega:=\Phi_\phi^*(d\theta_{S^n \times S^1})=\pi_{T^*S^n}^*((\tau^{-1}\Phi_p  \tau  \Phi_p^{-1})^* (d\theta_{S^n}))-dq \wedge dp+\beta(p)\wedge dp,\]
where $\pi_{T^*S^n} \colon T^*(S^n \times S^1) \to T^*S^n$ is the natural projection, and where $\beta(p)$ is a family of compactly supported one-forms on $T^*S^n$ satisfying $\beta(p)=0$ whenever $p \le 0$ or $p \ge 1$. The two-form
\[\omega':=\pi_{T^*S^n}^*((\tau^{-1}\Phi_p  \tau  \Phi_p^{-1})^* (d\theta_{S^n}))-dq \wedge dp\]
is non-degenerate, and coincides with $\omega$ outside of a compact set. It can be explicitly checked that a linear interpolation $(1-s)\omega+s\omega'$ is a compactly supported homotopy through non-degenerate two-forms.

It now remains to prove that $\omega'$ is homotopic to $d\theta_{S^n \times S^1}$ by a compactly supported homotopy through non-degenerate two-forms. We will show that the path
\[(\tau^{-1} \Phi_t \tau \Phi_t^{-1})^*(d\theta_{S^n}), \:\: t \in [0,1],\]
of symplectic forms on $T^*S^n$, which are standard outside of a compact set, is homotopic through non-degenerate two-forms to $d\theta_{S^n}$ relative endpoints $t \in \{0,1\}$.

Write $\eta_t :=\Phi_t^*(d\theta_{S^n})$. Lemma \ref{lma:formsymp} below produces a (non compactly supported) homotopy $\eta_{t,s}$ of  paths relative endpoints of non-degenerate two-forms satisfying
\begin{gather*}
\eta_{t,0}=\eta_t ,\:\:\eta_{t,1}=d\theta_{S^n}.
\end{gather*}
Using a smooth compactly supported bump-function $\rho \colon T^*S^n \to \RR$ which is equal to one in some neighbourhood of the zero-section, we can construct a compactly supported homotopy
\[\widetilde{\eta}_{t,s} :=\eta_{t,\rho s}, \:\:\widetilde{\eta}_{t,0}=\eta_t.\]
This homotopy may thus be assumed to satisfy $\widetilde{\eta}_{t,s}=\eta_{t,s}$ on any given compact neighbourhood of the zero-section, while $\widetilde{\eta}_{t,s}=\eta_t$ holds outside of some bigger compact set.

The sought compactly supported homotopy contracting the loop
\[(\tau^{-1} \Phi_t \tau \Phi_t^{-1})^*(d\theta_{S^n})=(\Phi_t^{-1})^*((\tau^{-1})^*(\Phi_t^*(d\theta_{S^n})))=(\Phi_t^{-1})^*((\tau^{-1})^*(\eta_t))=(\Phi_t^{-1})^*((\tau^{-1})^*(\widetilde{\eta}_{t,0}))\]
can now be constructed as follows. First, there is a compactly supported homotopy to the loop $(\Phi_t^{-1})^*((\tau^{-1})^*(\widetilde{\eta}_{t,1}))$. Since the support of $\tau^{-1}$ may be assumed to be contained in the set where $\widetilde{\eta}_{t,1}=\eta_{t,1}=d\theta_{S^n}$, and since $\tau^{-1}$ is a symplectomorphism, the latter loop may be assumed to be of the form $(\Phi_t^{-1})^*(\widetilde{\eta}_{t,1})$. This loop is in turn homotopic to \[(\Phi_t^{-1})^*(\widetilde{\eta}_{t,0})=(\Phi_t^{-1})^*(\eta_t)=(\Phi_t^{-1})^*(\Phi_t^*(d\theta_{S^n}))=d\theta_{S^n}.\]
Observe that these homotopies are through non-degenerate two forms and have compact support.
\end{proof}

\begin{lma}
\label{lma:formsymp}
Suppose that a parametrisation $\ell \co L \to T^*L$ of the zero-section is formally Lagrangian isotopic to the canonical inclusion of the zero-section. It follows that there exists a (non compactly supported) isotopy $\Phi_t \co T^*L \to T^*L$ satisfying the property that $\Phi_0=\ell^*$, while $\Phi_1 =\OP{id}_{T^*L}$. The corresponding path of diffeomorphisms $\Phi_t \co T^*L \to T^*L$ may furthermore be taken to satisfy the property that the path of non-degenerate two-forms $\eta_t:=\Phi_t^*(d\theta_L)$ is homotopic (through non-degenerate two-forms) to $d\theta_L$ relative endpoints.
\end{lma}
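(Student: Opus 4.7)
The plan is to apply the Lees--Gromov h-principle to upgrade the given formal Lagrangian isotopy to a genuine one, then use a parametric Weinstein neighbourhood argument to construct $\Phi_t$, and finally exploit the symplectic nature of the construction near the zero-section to contract the resulting loop of two-forms within the space of non-degenerate two-forms.

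First, the formal Lagrangian isotopy between $\ell$ and the canonical inclusion $\iota_0 \co L \hookrightarrow T^*L$ supplies a smooth isotopy $\ell_t \co L \to T^*L$ covered by a path of Lagrangian bundle maps. Invoking the parametric h-principle of Lees--Gromov \cite{Lee,Gro}, we may deform this data, relative to the endpoints, to a genuine Lagrangian isotopy $\ell_t'$ through Lagrangian immersions. (A mild orientation issue regarding whether we isotope $\iota_0 \circ \phi$ or $\iota_0 \circ \phi^{-1}$ to $\iota_0$ is resolved by precomposition with $\phi^{-1}$, which preserves the formal Lagrangian isotopy hypothesis.)

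Second, a parametric version of Weinstein's tubular neighbourhood theorem produces a smoothly varying family of symplectic embeddings $\Theta_t \co U \to T^*L$ of a fixed neighbourhood $U$ of the zero-section which realises $\ell_t'$ on the zero-section. By the uniqueness part of Weinstein's theorem, we can further arrange that $\Theta_0$ agrees with the cotangent lift $\ell^*$ on a smaller neighbourhood $U' \subset U$, and that $\Theta_1 = \id_U$. Extending the family $\Theta_t$ to a global smooth isotopy $\Phi_t \co T^*L \to T^*L$ (using that the complement of $U$ can be identified with a collar of the unit cotangent bundle) produces $\Phi_t$ with $\Phi_0 = \ell^*$, $\Phi_1 = \id_{T^*L}$, and $\Phi_t$ symplectic throughout $U'$.

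For the homotopy of two-forms, note that $\eta_t := \Phi_t^*(d\theta_L)$ equals $d\theta_L$ on $U'$ for every $t$, since $\Phi_t$ is symplectic there by construction. Hence the loop $\eta_t$ differs from the constant loop $d\theta_L$ only outside $U'$. A straight-line interpolation $\eta_{t,s} := (1-s)\eta_t + s\, d\theta_L$ then provides a candidate for the desired homotopy, which remains non-degenerate provided $\eta_t$ stays sufficiently pointwise close to $d\theta_L$. The main obstacle lies precisely here: the extension of $\Phi_t$ outside $U'$ must be arranged so that $\eta_t$ is a $C^0$-small perturbation of $d\theta_L$ throughout. This is achievable by interpolating $\Phi_t$ on the complement of $U'$ between $\ell^*$ and $\id_{T^*L}$ through a family of diffeomorphisms whose $C^1$-deviation can be controlled, using the fact that $\Phi_t$ on $U'$ already provides a compatible symplectic framework near the zero-section that can be matched to the global geometry with arbitrarily small distortion.
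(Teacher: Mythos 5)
Your proposal has two genuine gaps, and both occur at the points where the paper's actual argument does something quite different.

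First, the step ``invoke the Lees--Gromov h-principle to get a genuine Lagrangian isotopy, then apply a parametric Weinstein neighbourhood theorem'' does not work as stated. The h-principle produces a path of Lagrangian \emph{immersions}, not embeddings; Lagrangian embeddings famously do not satisfy an h-principle (indeed, the detection mechanism of this whole paper rests on that failure, via the results of Abouzaid, Abouzaid--Kragh and Ekholm--Smith). An immersed Lagrangian only has an \emph{immersed} Weinstein neighbourhood, so your family $\Theta_t$ would be a family of symplectic immersions of $U$, which cannot be extended to global diffeomorphisms $\Phi_t$ of $T^*L$. The paper avoids this entirely: it never upgrades the formal isotopy to a genuine one. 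It builds $\Phi_t$ on $[0,1/2]$ as $F_t\circ\ell^*$ for a smooth (merely formally Lagrangian) isotopy $F_t$, arranges $\Phi_{1/2}=\OP{id}$ near the zero-section, and then uses the Alexander trick $r_{1/t}\circ\Phi_{1/2}\circ r_t$ to finish the isotopy away from the zero-section.

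Second, your contraction of the loop $\eta_t$ by straight-line interpolation requires $\eta_t$ to be $C^0$-close to $d\theta_L$ everywhere, and the sentence asserting that the extension of $\Phi_t$ can be arranged ``with arbitrarily small distortion'' is exactly the missing content: $\ell^*$ is a fixed diffeomorphism whose $C^1$-distance from the identity away from the zero-section is whatever it is, and there is no mechanism to make the interpolating family uniformly $C^1$-small. The paper's route is different and is where the real work lies: the formal Lagrangian isotopy supplies a homotopy $A_{t,s}$ of bundle automorphisms of $TT^*L|_L$ with $\Phi_t^*(d\theta_L)\circ A_t|_L=d\theta_L|_L$, and one constructs a fibrewise bundle retraction $R_s$ of $TT^*L$ onto $TT^*L|_L$ (via the Levi-Civita splitting $TT^*L\simeq V\oplus H$ and the compatible almost complex structure) satisfying $d\theta_L\circ R_s=d\theta_L$. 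Concatenating $\Phi_t^*(d\theta_L)\circ R_{1-s}$ with $\Phi_t^*(d\theta_L)\circ A_{t,s}\circ R_0$ contracts the loop using only linearised data along the zero-section, which is precisely what the formal hypothesis provides; no pointwise closeness anywhere else is needed. You should replace your final paragraph with an argument of this kind, since the $C^0$-smallness you rely on is not available.
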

\begin{proof}
We start by constructing $\Phi_t$. By assumption, there exists a compactly supported smooth isotopy
\[F_t \co T^*L \to T^*L,\:\: t\in[0,1/2],\]
such that $F_0=\id$, $F_{1/2} \circ \ell$ is the canonical inclusion, and where $F_{t/2} \circ \ell$ is the formally Lagrangian isotopy.

Consider the smooth isotopy
\[\Phi_t:=F_t \circ \ell^* \co T^*L \to T^*L,\:\: t \in [0,1/2],\]
where hence $\Phi_0 = \ell^*$. The fact that the isotopy is formally Lagrangian implies that we may assume that $\Phi_{1/2}=\OP{id}_{T^*L}$ holds in some neighbourhood of the zero-section. The Alexander trick can now be used to show that $\Phi_{1/2}$ is smoothly isotopic to $\OP{id}_{T^*L}$ by an isotopy supported in the complement of some neighbourhood of the zero section: simply consider the family $r_{1/t} \circ \Phi_{1/2} \circ r_t$, $t \in (0,1]$, where $r_t$ denotes fibre-wise multiplication by $t \in \RR$. By concatenation, we can thus extend the isotopy $\Phi_t$, $t \in [0,1/2]$, to an isotopy defined for $t \in [0,1]$, and for which $\Phi_1=\OP{id}_{T^*L}$ holds on all of $T^*L$.

It remains to show that the loop $\eta_t:=\Phi_t^*(d\theta_L)$ of symplectic forms satisfies the sought properties.

The fact that the isotopy is formally Lagrangian implies that there exists a path of bundle-automorphisms
\[A_t \co TT^*L|_L \to TT^*L|_L\]
covering the identity, for which
\[\Phi_t^*(d\theta_L) \circ A_t|_L=d\theta_L|_L,\]
and where $A_t$ moreover is homotopic to the identity relative endpoints. We will use $A_{t,s}$ to denote this homotopy, where \begin{gather*}
A_{t,0}=\OP{id}_{TT^*L|_L},\:\:A_{t,1}=A_t.
\end{gather*}

Choosing a metric on $L$, the induced Levi-Civita connection on the bundle $T^*L$ induces an identification $TT^*L \simeq V \oplus H$, where $H$ and $V$ denotes the so-called horizontal and vertical sub-bundles, respectively. Let $\pi \co T^*L \to L$ denote the canonical projection. Recall that there is a canonical identification of $V_x$ with $T_{\pi(x)}^*L$, as well as of $H_x$ with $T_{\pi(x)}L$, where the latter identification is induced by the tangent map $D\pi \co TT^*L \to TL$.

It is a standard fact that both $V$ and $H$ are Lagrangian subbundles of $TT^*L$. For $V$ this is obvious, while for $H$ the claim can be deduced from the well-known fact that the above choice of Rimannian metric on $L$ determines a compatible almost complex structure $J$ on $T^*L$ satisfying $H=JV$ (see {\cite[Section 5, Appendix (iii)]{Dombrowski}}), since a compatible almost complex structure preserves the symplectic form by definition. Recall that the almost complex structure $J$ is determined by the requirement that for $v\in V_x\cong T^*_{\pi(x)}L$ identified with a vector $v'\in T_{\pi(x)}L$ using the metric, $J(v)$ is defined to be the horizontal lift of $v'$ to $H_x$. 

Using $r_s \co T^*L \to T^*L$ to denote the bundle morphism which rescales each fibre via scalar multiplication by $s$,
there exists a smooth family of bundle morphisms
\[
\xymatrix{ TT^*L \ar[d]^{\pi} \ar[r]^{R_s} &  TT^*L\ar[d]^{\pi} \\
T^*L \ar[r]^{r_s} & T^*L,
}\]
covering $r_s$ and which is uniquely determined by the requirement that $R_s$ commutes with $J$, preserves $V$, and that the restriction $R_s:V_x \to V_{sx}$ moreover is induced by the canonical identifications $V_x \simeq T^*_{\pi(x)}L \simeq V_{sx}$. It follows that $R_s$ satisfies
\begin{gather*}
d\theta_L \circ R_s =d\theta_L, \\
R_1 = \OP{id}_{TT^*L},\:\:R_0 \co TT^*L \to TT^*L|_L.
\end{gather*}
Observe that $R_s$ is not too far from the bundle morphism $Dr_s$, which however does not preserve the symplectic form (except in the case $s=1$), due to the fact that it rescales the vertical subbundle $V$.

The sought homotopy from the loop $\Phi_t^*(d\theta_L)$ to the constant loop $d\theta_L$ is now obtained by concatenating the path
\[ \Phi_t^*(d\theta_L) \circ R_{1-s}, \:\: s \in [0,1],\]
with the path
\[ \Phi_t^*(d\theta_L) \circ A_{t,s} \circ R_0, \:\: s \in [0,1].\]
\end{proof}

\section{Open questions}
\label{sec:opnqu}
There is a Lagrangian embedding $S^n \times S^1 \hookrightarrow \CC^{n+1}$ which, using Weinstein's Lagrangian neighbourhood theorem, induces a symplectic embedding $D^*(S^n\times S^1) \hookrightarrow \CC^{n+1}$ of a co-disc bundle of appropriate radius.
\begin{quest} The compactly supported symplectomorphism $\Psi$ of $T^*(S^n\times S^1)$ defined in Section \ref{sec:symp} induces a compactly supported symplectomorphism of $\CC^{n+1}$ by using the above inclusion. Is this symplectomorphism isotopic to the identity through compactly supported symplectomorphisms? Since any Lagrangian can be disjoined from the support of $\Psi$ by translation, $\Psi$ cannot act nontrivially on the isotopy classes of parametrised Lagrangians, so our methods cannot detect nontriviality of $\Psi$ (this is true more generally in any subcritical Stein manifold where any compact subset can be displaced).
\end{quest}
\begin{quest}
Again, using the above symplectic inclusion, the symplectic structure on $T^*(S^n \times S^1)$ constructed in Section \ref{sec:exoticstructure} when $n=4\ell+3$, $\ell \ge 1$, induces a symplectic structure on $\CC^{n+1}$ which is standard outside of a compact set. Is this symplectic structure symplectomorphic to the standard one by a compactly supported diffeomorphism?
\end{quest}

\begin{quest}
Do the symplectomorphisms $\tau^{-1}\tau_{\phi}$ have infinite order in the symplectic mapping class group $\pi_0(\OP{Symp}^c(T^*S^n))$? Since there are only finitely many diffeomorphism classes of homotopy spheres in each dimension, our methods cannot detect infinite order.
\end{quest}
\begin{quest}
Are the symplectomorphisms $\tau_{\phi}$ and $\tau$ of $T^*S^n$ always smoothly isotopic?
\end{quest}

\begin{acknowledgements}

The authors would like to thank Ivan Smith and Oscar Randal-Williams for helpful discussions, in particular for pointing out the Gromoll filtration and Corollary \ref{cor:browder}.

\end{acknowledgements}

\bibliographystyle{plain}
\bibliography{exotic-diffeo-jtop}

\affiliationone{
  Georgios Dimitroglou Rizell\\
  Centre for Mathematical Sciences\\
  University of Cambridge\\
  Cambridge, CB3 0WB\\
  United Kingdom\\
  \email{g.dimitroglou@maths.cam.ac.uk}}
\affiliationtwo{
  Jonathan David Evans\\
  Department of Mathematics\\
  University College London\\
  Gower Street\\
  London, WC1E 6BT\\
  United Kingdom\\
  \email{j.d.evans@ucl.ac.uk}}
\end{document}